\documentclass{amsart}

\usepackage{graphicx}
\usepackage{mathtools}
\mathtoolsset{showonlyrefs}
\usepackage{enumitem}
\usepackage{verbatim}

\usepackage{hyperref}

\def \R{\mathbb R}
\def \B{\mathbb B}
\def \N{\mathbb N}
\def \Z{\mathbb Z}
\def \e{\varepsilon}
\newcommand \vol[2][3]{\left|#2\right|_{#1}}
\newcommand \dist {\operatorname{dist}}
\newtheorem{theorem}{Theorem}
\newtheorem{lemma}[theorem]{Lemma}
\newtheorem{definition}[theorem]{Definition}
\newtheorem{proposition}[theorem]{Proposition}
\newtheorem{question}[theorem]{Question}

\title{On convex bodies with constant non-central sections}

\author{J. Haddad, D. Ryabogin}

\begin{document}

\begin{abstract}
	We prove that if $C$ is a symmetric convex body of revolution in $\R^4$ containing the unit Euclidean ball $\B_4$, such that the sections of $C$ by hyperplanes tangent to $\B_4$ have constant area $A>0$, then $C$ is a Euclidean ball, provided $\frac 1{\pi} \arctan((\frac{3A}{4\pi})^{1/3})$ satisfies certain arithmetic properties that can be read from its expansion as a continued fraction.
	We show that the set of values $A$ satisfying these properties has positive Hausdorff dimension.
\end{abstract}

\maketitle

\section{Introduction and main theorem}

One of the central problems in geometric tomography asks to what extent a convex body (a compact convex set with non-empty interior) is determined by the volumes of its sections or projections. A well-known question of this type was formulated by Barker and Larman in \cite{barker2001}. They asked the following:

Let $C_1$ and $C_2$ be convex bodies in $\R^n$ containing the unit Euclidean ball $\B_n$ in their interiors. Assume that for every hyperplane $H$ tangent to $\B_n$,
\begin{equation}
	\label{eq_equalsections}
	\vol[n-1]{C_1 \cap H} = \vol[n-1]{C_2 \cap H},
\end{equation}
where $\vol[n-1]{\ \cdot \ }$ denotes the $(n-1)$-dimensional Lebesgue measure in $H$.
Is it true that $C_1=C_2$?

Despite its simple formulation this problem remains open in general.
The conjecture is related to several classical problems in convex geometry and geometric tomography.

If central sections (sections by hyperplanes containing the origin) are considered, then the answer is positive for origin-symmetric $C_1, C_2$, thanks to the injectivity of the Radon transform.
The available toolbox of central sections is much more developed than for non-central sections, and several techniques from harmonic analysis are available in this case.
For an overview of the relevant problems for central sections see \cite{koldobsky2005fourier, gardner2006geometric}.

Some partial results related to the Barker--Larman conjecture are known.
The case where $n=2$ and $C_2$ is a centered Euclidean ball in the plane had been proved in 1951 by Santaló in \cite{santalo1951two}, where he also proved an analogous result for spherical convex bodies inside the sphere $S^2$.

Barker and Larman in \cite{barker2001} established several variants of the conjecture. For example, they proved that in dimension $2$, the bodies are equal if one assumes that for some $\varepsilon>0$ small, \eqref{eq_equalsections} holds for all lines $H$ at a distance between $1-\varepsilon$ and $1$ from the origin.
A similar result was also proved in odd dimensions.
For sections by subspaces $H$ of codimension larger than $1$, the problem was solved in the affirmative.

Yaskin \cite{yaskin2011unique} proved the conjecture in the case where both $C_1, C_2$ are polytopes in $\R^n$.
Later Yaskin and Yaskina in \cite{yaskin2015thick} proved it in dimension $3$ in the case where $C_2 = r \B_3$, under the additional assumption that the two bodies $C_1$ and $C_2$ have the same volume.

Similar results of determination of convex bodies by non-central sections were established in \cite{alfonseca2025characterizations, alfonseca2025croft, gardner2012problem, kurusa2015characterizations, nazarov2014asymmetric, yaskin2017non}.

A natural special case to consider is when one body is a Euclidean ball and the other one satisfies additional symmetry conditions.
In particular, bodies of revolution allow one to define the convex body in terms of a single function describing its profile.

Our main result gives a positive answer to this question for bodies of revolution in dimension $4$, under an additional condition on the value of the volume of the section, which is arithmetic in nature.

\begin{theorem}
	\label{thm_main}
	There exists an infinite set $\mathbb A \subseteq \R$ of positive Hausdorff dimension with the following property:

	Let $C \subseteq \R^4$ be a symmetric convex body of revolution containing the centered unit Euclidean ball $\B_4$ in its interior, such that for every hyperplane $H$ tangent to $\B_4$, $\vol[3]{C \cap H} = A$ where $A$ is a constant independent of $H$.
	If $A \in \mathbb A$ then $C$ is a Euclidean ball.
\end{theorem}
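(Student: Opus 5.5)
Since $C$ is a body of revolution (about the $x_4$-axis, say) and symmetric, we describe it by a radial profile. The idea is to turn the condition on tangent sections into a functional equation for this profile along the great circle of directions through the axis, then linearize around the ball and use arithmetic (small-divisor) properties of a rotation number to force the perturbation to vanish.

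\textbf{Step 1: the ball and the rotation angle.} For the ball $R\B_4$ with $R>1$, a tangent section is a $3$-ball of radius $\sqrt{R^2-1}$. Its volume is $A=\frac{4\pi}{3}(R^2-1)^{3/2}$, so
\[
\tan\alpha=\Bigl(\tfrac{3A}{4\pi}\Bigr)^{1/3}=\sqrt{R^2-1},
\]
where $\alpha$ is the half-angle subtended at the origin by a tangent section of the ball. This angle will be the rotation number; write $\theta=\alpha/\pi$.

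\textbf{Step 2: the section volume as an integral along the profile.} Let $H_\xi$ be the tangent hyperplane at $\xi\in S^3$; by the symmetry of $C$ only the angle $\phi$ between $\xi$ and the axis matters. We write $\vol[3]{C\cap H_\xi}$ in terms of the radial function $\rho(\psi)$ of $C$ restricted to a plane through the axis. The set $C\cap H_\xi$ is itself a $3$-dimensional body of revolution about the line in $H_\xi$ through the tangency point, in the direction of the projected axis. In dimension $4$ the cross-sectional areas have an explicit form, so the volume reduces to a one-dimensional integral of a function of $\rho$ over an arc $[\phi-\beta(\phi),\,\phi+\beta'(\phi)]$, whose endpoints are where the profile meets the tangent line. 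The natural unknown is the support-type quantity $h$ defined by $\rho^2=1+h^2$, or equivalently the angle at which $\partial C$ crosses the tangent line. Differentiating the identity $\vol[3]{C\cap H_\xi}\equiv A$ in $\phi$, the interior contribution should reduce to boundary terms. This is the point where dimension $4$ matters, because the volume of a $3$-ball is a cube of its radius and the cancellation works out. The result is a relation between the values of the profile at the two chord endpoints $\phi\pm$. It says that the map $T$ sending one endpoint of a chord to the other (a billiard-like, or Poncelet-type, map on the circle) preserves some quantity $F(\rho)$. For the ball, $T$ is rotation by $2\alpha$.

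\textbf{Step 3: rigidity via the rotation number.} We show that $T$ is a circle homeomorphism with rotation number $2\theta$, independent of $C$, because $A$ fixes it through the invariant relation. Since $C$ is symmetric and contains $\B_4$, $T$ commutes with the reflections. If $\theta$ is irrational, the invariant function forces $T$ to be conjugate to the rotation. The conjugacy is then analytic or smooth provided $\theta$ satisfies a Diophantine or Brjuno-type condition read off from its continued fraction; this is the KAM/Herman step. It is \emph{a priori} unclear whether convexity gives enough regularity of $T$ to apply Herman's theory. I therefore expect to use instead a direct Fourier argument: linearize the relation, write the profile perturbation in Fourier series, and obtain equations of the form $(e^{2\pi i k\theta}-1)\hat u_k=$ (quadratic terms). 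The arithmetic condition on $\theta$ (controlled growth of the partial quotients along a subsequence, for instance) then lets an iteration or bootstrap show $\hat u_k=0$ for all $k\neq0$, so the profile is constant and $C$ is a ball. We define $\mathbb A$ as the set of $A$ for which $\theta=\frac1\pi\arctan((3A/4\pi)^{1/3})$ satisfies the condition. The map $A\mapsto\theta$ is a smooth diffeomorphism, so it preserves Hausdorff dimension. Positive dimension then follows from Jarník-type results: the set of numbers with prescribed bounded (or suitably constrained) partial quotients has positive dimension.

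\textbf{Main obstacle.} The hardest step is the nonlinear rigidity in Step 3. A merely convex, possibly nonsmooth profile must be shown to be rotation-invariant using only the small-divisor condition. My first attempt would be to exploit symmetry together with an exact conjugacy built from the invariant measure of $T$. I would then show that this measure must be Lebesgue by comparing the orbits of $T$ with those of the rotation at the denominators $q_n$ of the convergents of $2\theta$. There the arithmetic hypothesis gives returns close enough to control the deviation.
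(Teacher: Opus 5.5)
There is a genuine gap. Step 3 carries the whole proof, but it is a program rather than an argument, and it rests on a misreading of what Step 2 yields.

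\textbf{What Step 2 actually gives.} Differentiating the section volume in the slope $s$ does eliminate the profile: the integrand $f^2-(sx+h)^2$ vanishes at the chord ends, and for $n=4$ what remains is $f$-free. The outcome, however, is not a quantity $F(\rho)$ preserved by the chord map. Let $\tau_a<0<\tau_b$ be the positions of the two endpoints measured along the chord from the tangency point, so that $\tau^2=|p|^2-1$. The identity says that $\frac12\tau^2+\frac s3\tau^3$ changes by exactly $\frac{2s}{3}(R^2-1)^{3/2}$ along the chord. This is the cubic relation of Proposition \ref{prop_polynomialequation}. Given one endpoint on $S_R$, the other endpoint is a root of a cubic. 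Its admissible roots are the ball's rotated point and, for negative slopes, possibly a second spurious one. So the relation neither determines $T_K$ nor fixes its rotation number in terms of $A$, and your claim that it does is unsupported.

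\textbf{Why Step 3 does not repair this.} Denjoy or Herman theory needs regularity of $T_K$ that you do not have. Even a conjugacy to a rotation would not make $\partial K$ round: Poncelet maps for a circle inside an ellipse are already analytically conjugate to rotations. The Fourier linearization $(e^{2\pi ik\theta}-1)\hat u_k=Q_k(u)$ can at best give uniqueness among bodies already close to $R\B_4$, because absorbing the quadratic terms $Q_k$ needs smallness. The theorem is global. Finally, the arithmetic condition defining $\mathbb A$ is never stated, so the positive-dimension claim has no content yet.

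\textbf{The missing ingredients, which the paper uses.}
First, a seed point. The tangent hyperplane orthogonal to the axis cuts $C$ in a $3$-ball of volume $A$, so $u_1=(1,-\sqrt{R^2-1})\in\partial K\cap S_R$.

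Second, propagation with control of the spurious root. One sets $u_{k+1}=T_R(u_k)$ and shows it lies on $\partial K$. The spurious root is impossible when $s\ge 0$ or $3+2s\sqrt{R^2-1}<0$. Otherwise, by Lemma \ref{lem_side}, it can occur only if $s(u_k)$ lies between $-s(X(K),0)$ and $-s(R,0)$.

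Third, localization of $X(K)$. The unknown $X(K)$ is pinned near $R$ by the two already established orbit points closest to the axis (Proposition \ref{prop_small_interval}). The arithmetic requirement is therefore a shrinking-target condition, not a small-divisor one:
\[
\delta_{k+1}>\max\{F_{-}(\alpha_k),F_{+}(\alpha_k,\beta_k)\},
\]
where the right-hand side is of order $\alpha_k^2+\alpha_k\beta_k$.

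Fourth, verification through the continued fraction of $\gamma_A$. Only odd multiples of $\gamma_A$ occur, so parity conditions on the partial quotients are imposed to make the $q_j$ odd. Growth bounds on the partial quotients then reduce the infinitely many conditions to finitely many checks.

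Density of the irrational orbit then gives $S_R\subseteq\partial K$ directly, with no conjugacy, linearization or regularity needed. Once such a concrete condition is fixed, your Hausdorff-dimension step via restricted partial quotients is essentially the paper's: digits in $\{2,4,6\}$ after a fixed prefix, via Good's theorem.
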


The proof of Theorem \ref{thm_main} will be split in two parts.
First we prove Theorem \ref{thm_infinite_rotation} which will be stated in Section \ref{sec_rotation}, where an explicit definition of the set $\mathbb A$ will be given.
Later in Section \ref{sec_finite_inequalities} we explain how to verify if a given value $A$ belongs to $\mathbb A$, and in Section \ref{sec_examples} we find infinitely many values $A \in \mathbb A$ and show that $\mathbb A$ has positive Hausdorff dimension.

The symmetry assumption is used in the proof in a non-essential way, and the Theorem probably holds also in the non-symmetric case.
But the symmetry simplifies our proof considerably.

From the proof of Theorem \ref{thm_main}, the examples given in Section \ref{sec_examples}, and some properties of sets with restricted continued fraction, it seems likely that $\mathbb A$ has positive Lebesgue measure.
We did not pursue this direction further.

In Section \ref{sec_area_equation} we study the equation describing the value of $\vol[3]{C \cap H}$ in terms of the slope of $H$ with respect to the axis of revolution.

In Section \ref{sec_rotation} we transform the geometrical problem into a problem about irrational rotations in the circle. Roughly speaking, we will define a sequence of points $u_k \in S^1$ rotating by the constant angle $\arctan((\frac{3A}{4\pi})^{1/3})$, and a sequence of intervals $I_k \subseteq S^1$ which shrink to a point as $k \to \infty$.
We prove that if $u_k \not\in I_k$ for every $k$, then the conclusion of Theorem \ref{thm_main} holds.

In Section \ref{sec_finite_inequalities} we show that this condition can be verified using arithmetic properties of the ``rotation number'' $\gamma = \frac 1\pi \arctan((\frac{3A}{4\pi})^{1/3})$, namely, we establish a finite number of inequalities involving the coefficients of its continued fraction (Theorem \ref{thm_estimates}), and show that these inequalities are satisfied for a subset of $\mathbb A$ of positive Hausdorff dimension.

Lastly, we think it is pertinent to mention an interesting result by Ungar \cite{ungar1954freak} where just as in Theorem \ref{thm_main}, a uniqueness result depends on whether a certain parameter belongs to a complicated set in the real line.
He considered the following question:
Let $f:S^2 \to \R$ be a continuous function such that if $D$ is any spherical cap of radius $\alpha$,
\begin{equation} \label{eq_int0} \int_D f = 0.\end{equation}
Does it follow that $f \equiv 0$?
Ungar called the following a ``freak theorem'':
The set of values of $\alpha$ for which \eqref{eq_int0} does not imply that $f \equiv 0$, is dense in $(0, \pi/2)$, and so is the set for which it does.

\subsection*{Acknowledgments}
The first author was supported by Grant RYC2021 - 031572 - I, funded by the Ministry of Science and Innovation / State Research Agency / 10.13039 / 501100011033 and by the Next Generation EU / Recovery, Transformation and Resilience Plan, and by Grant PID2022-136320NB-I00 funded by the Ministry of Science and Innovation.

The second author is supported in part by the U.S. National Science Foundation Grant DMS-2247771 and
the United States-Israel Binational Science Foundation (BSF).

\section{Analysis of the area equation}

\label{sec_area_equation}
Let us fix some notation first:
The canonical vectors of $\R^n$ are denoted by $e_i$, $i = 1, \ldots, n$.
The subspace generated by the vectors $v_1, \ldots, v_k$ is $\langle v_1, \ldots, v_k \rangle$.
The line in the plane going through the points $x$ and $y$ is written as $\overline{x, y}$ or $\overline{xy}$.
The centered unit Euclidean ball in $\R^k$ is $\B_k$ and its $k$-dimensional Lebesgue measure is $\omega_k$.
The only values we need are $\omega_3 = \frac 43 \pi$ and $\omega_2 = \pi$.
The centered circumference of radius $R$ will be denoted by $S_R$.

Let $C$ be as in Theorem \ref{thm_main}.
Since the convex body $C$ is assumed to be a symmetric body of revolution, we may assume without loss of generality that the axis of revolution is $e_1$, and $C$ it is completely determined by 
\begin{equation}
	\label{def_K}
	K = C \cap \langle e_1, e_2 \rangle. 
\end{equation}
This is a convex body in the plane $\langle e_1, e_2 \rangle$, which we identify naturally with $\R^2$, where the $X$ axis is the axis of revolution.
By the assumption in Theorem \ref{thm_main} that $C$ is a body of revolution, it suffices to consider only the $3$-dimensional planes $H$ in $\R^4$ that are parallel to $e_3$ and $e_4$.
Such a plane is determined by its intersection with $\langle e_1, e_2 \rangle$, which is a one-dimensional line tangent to the two-dimensional unit Euclidean ball $\B_2 \subseteq K \subseteq \R^2$.

In the case $C$ is the centered Euclidean ball of radius $R$, the area of intersection is 
\begin{equation}
	\label{eq_RA_relation}
	A = \omega_3 (R^2 - 1)^{3/2}.
\end{equation}
Then from the given area $A$ in Theorem \ref{thm_main}, our goal is to prove that $K$ is the Euclidean ball of radius $R>0$ given by the relation \eqref{eq_RA_relation}.

\subsection{Computation of the area of the section}
Since $K$ is convex and symmetric with respect to the $X$ axis, we can express the set $K$ as
\begin{equation}
	\label{eq_Ksubgrapf}
	K = \{(x,y) \in \R^2 : x \in [-X(K), X(K)], |y| \leq f(x)\},
\end{equation}
where $f:[-X(K),X(K)] \to \R_+$ is a concave function.
Here the interval of definition $[-X(K),X(K)]$ is the orthogonal projection of $K$ to the $X$ axis.
The number $X(K)$ will play an important role in the proof of Theorem \ref{thm_main}.
Notice that here we are not assuming necessarily that $f(x) \to 0$ as $x \to \pm X(K)$.
Furthermore, since $K$ is also symmetric with respect to the origin, then it is symmetric with respect to the $Y$ axis, and thus the function $f$ is even.

Given a line $L \subseteq \R^2$ which is tangent to $\B_2$ we want to compute the $3$-dimensional area of the intersection of $H_L = L + \langle e_3, e_4 \rangle$ with $C$.
This was done in \cite[Lemma 4.2]{alfonseca2025croft}. We reproduce it here for clarity.

Since $K$ is symmetric with respect to the $X$ axis, we can always assume that $L$ intersects $\B_2$ at the upper half-plane.
If $L$ is parallel to $e_2$, then the intersection of $C$ with $H_L$ is a $3$-dimensional ball of radius $f(\pm 1)$.
By \eqref{eq_RA_relation} we know that $f(\pm 1) = (R^2-1)^{1/2}$. 
By the symmetry assumption on $K$ we already know that there are at least four points in the intersection,
\[ \left(\pm 1, \pm \left(R^2-1\right)^{1/2} \right) \in S_R \cap \partial K.\]
If $L$ is not parallel to $e_2$ then let $s$ be its slope. Its equation in the plane is 
\begin{equation}
	\label{eq_line}
	y = s x + h(s), \text{ where } h(s) = \sqrt{1+s^2}.
\end{equation}
We will denote this line by $L_s$.

For $s\in \R$, the line $L_s$ intersects $\partial K$ in exactly two points. Let us call $a(s) < b(s)$ the $X$-coordinates of these points, so that 
\[\left\{ \left(a(s),f(a(s))\right), \left(b(s), f(b(s))\right) \right\} = \partial K \cap L_s.\]
The set $C$ is a union of $3$-dimensional balls $(\{x\} \times \langle e_2, e_3, e_4 \rangle ) \cap C$ with $x \in \R$.
The intersection of the $3$-dimensional hyperplane $H = L + \langle e_3, e_4 \rangle$ with $C$ is then a union of two-dimensional circles 
\[H \cap C = \bigcup_{a(s) < x < b(s)} H \cap (\{x\} \times \langle e_2, e_3, e_4 \rangle ) \cap C,\]
where each circle has radius
\[r_x =(  f(x)^2 - (h(s) + x s)^2)^{1/2}.\]
The area $A$ can be computed by integrating $\omega_2 r_x ^2$ with respect to $x$, with area element $\sqrt{1+s^2}$. This is
\[A = \int_{a(s)}^{b(s)} \sqrt{1+s^2}(f(x)^2 - (h(s) + x s)^2) \omega_2 dx.\]
Rearranging terms and taking derivative with respect to $s$ we get
\begin{align}
	\label{eq_A}
	\frac{A/\omega_2}{\sqrt{1+s^2}} &= \int_{a(s)}^{b(s)} (f(x)^2 - (h(s) + x s)^2) d x \\
	\frac{\partial}{\partial s}\left[ \frac{(R^2-1)^{3/2} \omega_3/\omega_2}{\sqrt{1+s^2}} \right] &= \int_{a(s)}^{b(s)} - 2 (h(s) + x s)(h'(s)+x) d x.
\end{align}
Notice that $f$ disappeared from the last equation (this is the only place in the paper where we use that $n=4$).

The case $s=0$ is trivial: Since $K$ is symmetric with respect to the $Y$ axis, this yields trivially $a(0) = - b(0)$.
For $s \neq 0$, we can divide the last equation by $s h'(s)$ (this will be useful for establishing the properties of the polynomial $P_s$ below).
In sum, we have just proved the following:
\begin{proposition}
	\label{prop_polynomialequation}
	Let $C$ be as in Theorem \ref{thm_main} and $K$ as in \eqref{def_K}.
	If $a(s)<b(s)$ are the $X$-coordinates of the two points of intersection of $\partial K$ with a line $L_s$ which is tangent to $\B_2$ at the upper half-plane and has slope $s$, then
	\begin{equation}
		\label{eq_H}
		P_{s}(b(s)) - P_{s}(a(s)) = - \frac{\frac 43 (R^2-1)^{3/2}}{s(1+s^2)},
	\end{equation}
	where $R$ is given by equation \eqref{eq_RA_relation} and
	\begin{equation}
		\label{eq_def_Ps}
		P_{s}(x) = -\frac 23 \frac{\sqrt{1+s^2}}{s} x^3 - \frac{1+2s^2}{s^2} x^2 -2 \frac{\sqrt{1+s^2}}{s} x.
	\end{equation}
	Notice that $P_{s}$ is a third-degree polynomial in $x$.
\end{proposition}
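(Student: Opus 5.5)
The plan is to differentiate the area identity \eqref{eq_A} with respect to $s$ and then evaluate and rescale the resulting integral; almost all of the work is justifying the differentiation. Write $g(x,s)=f(x)^2-(h(s)+xs)^2$ and $F(s)=\int_{a(s)}^{b(s)} g(x,s)\,dx$. By the computation preceding the statement, $F(s)=\frac{A/\omega_2}{\sqrt{1+s^2}}$. Since $A=\omega_3(R^2-1)^{3/2}$ by \eqref{eq_RA_relation} and $\omega_3/\omega_2=\frac43$, this reads
\[
F(s)=\tfrac43 (R^2-1)^{3/2}(1+s^2)^{-1/2}.
\]
The key observation is that $g(a(s),s)=g(b(s),s)=0$, because $(a(s),f(a(s)))$ and $(b(s),f(b(s)))$ lie on $L_s$. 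Hence the boundary terms in Leibniz's rule vanish, and formally
\[
F'(s)=\int_{a(s)}^{b(s)}\partial_s g(x,s)\,dx=\int_{a(s)}^{b(s)}-2(h(s)+xs)(h'(s)+x)\,dx,
\]
which is the second line of \eqref{eq_A}. In particular $f$ drops out.

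The step I expect to be the main obstacle is justifying this differentiation, since $\partial K$ need not be smooth and $a,b$ need not be differentiable. My approach is as follows.

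\begin{enumerate}[label=(\roman*)]
\item The line $L_s$ touches $\B_2$, and $\B_2\subseteq \operatorname{int}K$, so $L_s$ passes through an interior point of $K$. It therefore crosses $\partial K$ transversally, with an angle bounded below locally uniformly in $s$.
\item A standard convexity argument then shows that $a(s)$ and $b(s)$ are locally Lipschitz in $s$.
\item Split the difference quotient as
\[
\frac{F(s+t)-F(s)}{t}=\int_{a(s)}^{b(s)}\frac{g(x,s+t)-g(x,s)}{t}\,dx+\frac1t\int_{\Delta_t}\pm\, g(x,s+t)\,dx,
\]
where $\Delta_t$ denotes the two intervals between $a(s),a(s+t)$ and between $b(s),b(s+t)$.
\item The first term converges to $\int_a^b\partial_s g\,dx$, since $g$ is a polynomial in $s$ and the $s$-derivative converges uniformly.
\item For the second term, $|\Delta_t|=O(t)$ by the Lipschitz bound. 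The integrand tends to $0$ on $\Delta_t$, because $g(\cdot,s+t)$ vanishes at $a(s+t)$ and $b(s+t)$, and $f$ is continuous along $\partial K$ in the relevant sense: one can parametrize by the boundary point rather than by $x$ if there are vertical segments. So this term is $o(1)$.
\end{enumerate}

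After that, the statement reduces to algebra, done for $s\neq 0$ (the case $s=0$ is excluded in the statement anyway). Using $h'=s/h$, I expand
\[
-2(h+xs)(h'+x)=-2\Big(s+\tfrac{1+2s^2}{h}\,x+s x^2\Big).
\]
Its antiderivative is
\[
-2sx-\tfrac{1+2s^2}{h}x^2-\tfrac{2s}{3}x^3 .
\]
Dividing by $s h'(s)=s^2/h$ gives exactly $P_s(x)$ from \eqref{eq_def_Ps}, so the right side becomes $P_s(b(s))-P_s(a(s))$.

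On the left, differentiating gives
\[
\frac{d}{ds}\Big[\tfrac43(R^2-1)^{3/2}(1+s^2)^{-1/2}\Big]=-\tfrac43(R^2-1)^{3/2}\,s(1+s^2)^{-3/2}.
\]
Dividing by $s^2/\sqrt{1+s^2}$ gives $-\frac{\frac43(R^2-1)^{3/2}}{s(1+s^2)}$, which is \eqref{eq_H}.
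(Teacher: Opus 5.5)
Your proposal is correct and follows the paper's route exactly. You differentiate the area identity in $s$, note that the $f$-terms drop out because the integrand vanishes at the chord endpoints, and then divide by $s h'(s)$ to recognize $P_s$ as the antiderivative. Your justification of the differentiation in steps (i)--(v) is something the paper does only formally.

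One small correction to step (v): if $b(s)=X(K)$ lies in the interior of a vertical edge of $\partial K$, then $g(b(s),s)>0$, so "$g$ vanishes at the endpoints" is not the right reason. The boundary contribution still vanishes, because $g$ is jointly continuous and $\Delta_t$ shrinks to $\{b(s)\}$. Then either $g(b(s),s)=0$, or $b$ is locally constant near $s$ and $\Delta_t=\emptyset$ for small $t$.
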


Consider $s \in \R$ and a tangent line $L_s$ with slope $s$ which touches the ball at a point $(t(s),d(s) )$ in the upper half-plane (see Figure \ref{fig_critical}), where $t(s), d(s)$ are functions of $s$.
For $s\neq 0$, the intersection point of $L_s$ with the $X$ axis is a point $(z(s),0)$, where $z(s)$ is also a function of $s$.
The functions $t(s),z(s)$ have very special properties, and will be used extensively in this section.
Observe that $z(s)$ and $t(s)$ have the same sign, which is opposite to the sign of $s$.
	\begin{figure}
		\caption{The values $t(s)$ and $z(s)$ are the critical points of the polynomial $P_s$, whose graph is depicted in blue.}
		\label{fig_critical}
		\includegraphics[width=\textwidth]{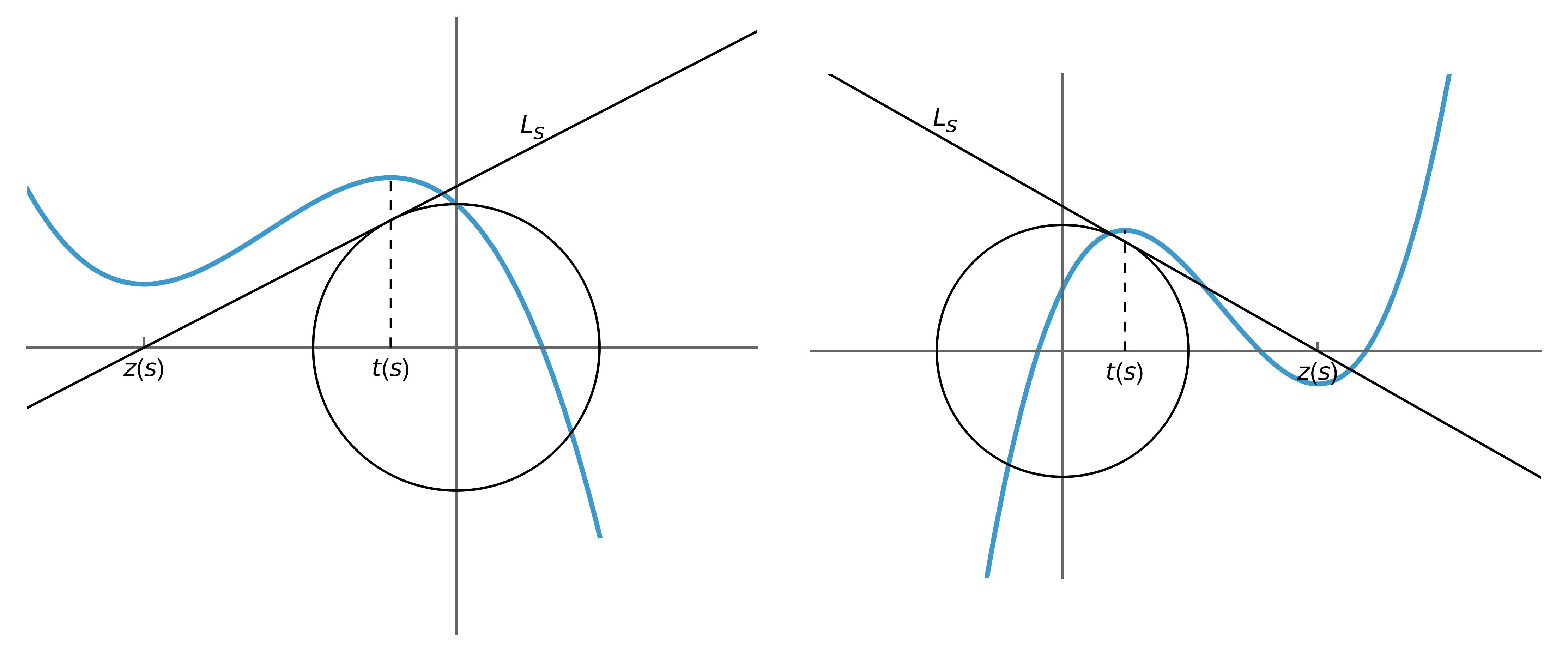}
	\end{figure}
Elementary computations show that
\begin{align}
	\label{eq_t}
	t(s) = -h'(s) = - \frac {s}{\sqrt{1+s^2}},\\
	\label{eq_z}
	z(s) = -h(s)/s = - \frac {\sqrt{1+s^2}}{s},
\end{align}
where $h$ is as in \eqref{eq_line}.
The following proposition is elementary.
\begin{proposition}
	\label{prop_zt}
	The functions $t(s), z(s)$ satisfy the following properties:
	\begin{enumerate}[label = (\alph*), ref = \alph* ]
		\item $t(s)$ and $z(s)$ are the only two critical points of $P_{s}$.
		\item $t(s)$ is always a local maximum of $P_{s}$ while $z(s)$ is always a local minimum.
		\item $z(s) t(s) = 1$. \label{eq_zt_product}
	\end{enumerate}
\end{proposition}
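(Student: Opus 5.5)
The plan is to compute $P_s'$ explicitly and observe that, after clearing a harmless factor, it is a quadratic in $x$ whose constant and leading coefficients coincide. All three items then follow from Vieta's formulas and a sign analysis. Throughout, write $h = h(s) = \sqrt{1+s^2}$ and assume $s \neq 0$, since $P_s$ is only defined there.

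First, differentiating \eqref{eq_def_Ps} gives
\[
P_s'(x) = -\frac{2}{s^2}\, q_s(x), \qquad q_s(x) = s h\, x^2 + (1+2s^2)\, x + s h .
\]
Since $-2/s^2 \neq 0$, the critical points of $P_s$ are exactly the roots of $q_s$. The discriminant is $(1+2s^2)^2 - 4s^2(1+s^2) = 1 > 0$, so there are exactly two distinct real roots.

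Next, for item (\ref{eq_zt_product}) and item (a), Vieta's formula shows that the product of the roots is $sh/(sh) = 1$. It then suffices to check that $t(s) = -s/h$ is a root. Substituting,
\[
q_s(t(s)) = \frac{s^3 - s(1+2s^2) + s(1+s^2)}{h} = 0 .
\]
Hence the other root is $1/t(s) = -h/s = z(s)$ by \eqref{eq_z}. This proves (a) and (c) simultaneously. One could also verify $q_s(z(s)) = 0$ directly, which is a computation of the same length.

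Finally, for (b) I split by the sign of $s$.
\begin{itemize}
\item If $s>0$, then $t(s) \in (-1,0)$ and $z(s) < -1$, so $z < t$. The leading coefficient $sh$ of $q_s$ is positive, so $q_s<0$ strictly between the roots and $q_s>0$ outside them. Therefore $P_s'$ is negative on $(-\infty,z)$, positive on $(z,t)$, and negative on $(t,\infty)$. So $z$ is a local minimum and $t$ is a local maximum.
\item If $s<0$, then $0 < t(s) < 1 < z(s)$, so $t<z$. Now the leading coefficient of $q_s$ is negative, so all signs flip. Thus $P_s'$ is positive on $(-\infty,t)$, negative on $(t,z)$, and positive on $(z,\infty)$. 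Again $t$ is a local maximum and $z$ is a local minimum.
\end{itemize}

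The only step requiring care is this sign bookkeeping in (b). The ordering of $t$ and $z$ and the sign of the leading coefficient both flip with the sign of $s$, and the point is that these two flips compensate each other. Everything else is direct substitution.
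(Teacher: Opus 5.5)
Your argument is correct and is essentially the verification the paper intends: the paper calls the proposition elementary and gives no proof, only remarking that $P_s$ arises from dividing the integrand $-2(h(s)+xs)(h'(s)+x)$ by $s\,h'(s)$. That remark shows your quadratic factors outright as $q_s(x) = (sx+h)(hx+s)$, so the roots $z(s)=-h/s$ and $t(s)=-s/h$, their product $1$, and the sign pattern in (b) can be read off directly without the discriminant or Vieta step.
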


We determined the tangent line $L_s$ from the slope $s$, and from the fact that it touches the ball in the upper half-plane.
But we can also determine a tangent line passing through a given point in the plane, and fixing a ``sense of rotation''.
Given a point $(x,y) \in \R^2 \setminus \B_2$ there are two lines passing through $(x,y)$ and tangent to $\B_2$.
We choose $L_{(x,y)}$ to be the one such that $c y - d x > 0$, where ${(c,d)} = L_{(x,y)} \cap \B_2$.
In other words, if a point travels from $(x,y)$ to $(c,d)$ it will rotate clockwise with respect to the origin (see Figure \ref{fig_senseofrotation}).
The line $L_{(x,y)}$ is vertical if $x = \pm 1$, otherwise its slope is denoted by $s(x,y)$.
Observe that $d>0$ if and only if $x<c$.
If this happens, we define $\e(x,y) = 1$ to indicate that $L_{(x,y)}$ touches the ball in the upper half-plane.
On the contrary if $d<0$ and $x>c$,
we define $\e(x,y) = -1$, indicating that $L_{(x,y)}$ touches the ball in the lower half-plane.

Notice that the line we defined by $L_s$ with $s\in \R$ always touches the unit ball at the upper half-plane, and $z(s), t(s)$ are defined with respect to this line.
However, $L_p$ with $p \in \R^2 \setminus \B_2$ can touch the unit ball at the upper or lower half-plane, depending on the position of $p$ in the plane, and determined by the value of $\varepsilon(p)$, as in Figure \ref{fig_senseofrotation}.
For simplicity, if $\varepsilon(p)=1$ we will write $z(p) = z(s(p))$ and $t(p) = t(s(p))$.

If $p \in \partial K$, then $L_p \cap \partial K$ consists of exactly two points $\{p, q\}$.
We will set $T_K(p) = q$. Replacing $K$ with $R \B_2$, we define similarly $T_R = T_{R \B_2}$.

If $T_K(x,y) = (v,w)$ and $\e(x,y) = 1$, then we have
\begin{equation}
	\label{eq_middle}
	x < t(x,y) < v.
\end{equation}
This is, the first coordinate of the touching point of $L_{(x,y)}$ and $\B_2$, lies inside the interval of integration of \eqref{eq_A}.
	\begin{figure}
		\caption{The function $T_K$ for some points in the plane. Here $\varepsilon(p)=1, s(p)<0, \varepsilon(p')=-1, s(p')>0$ and $\varepsilon(p'') = -1, s(p'')>0$.}
		\label{fig_senseofrotation}
		\includegraphics[scale=.75]{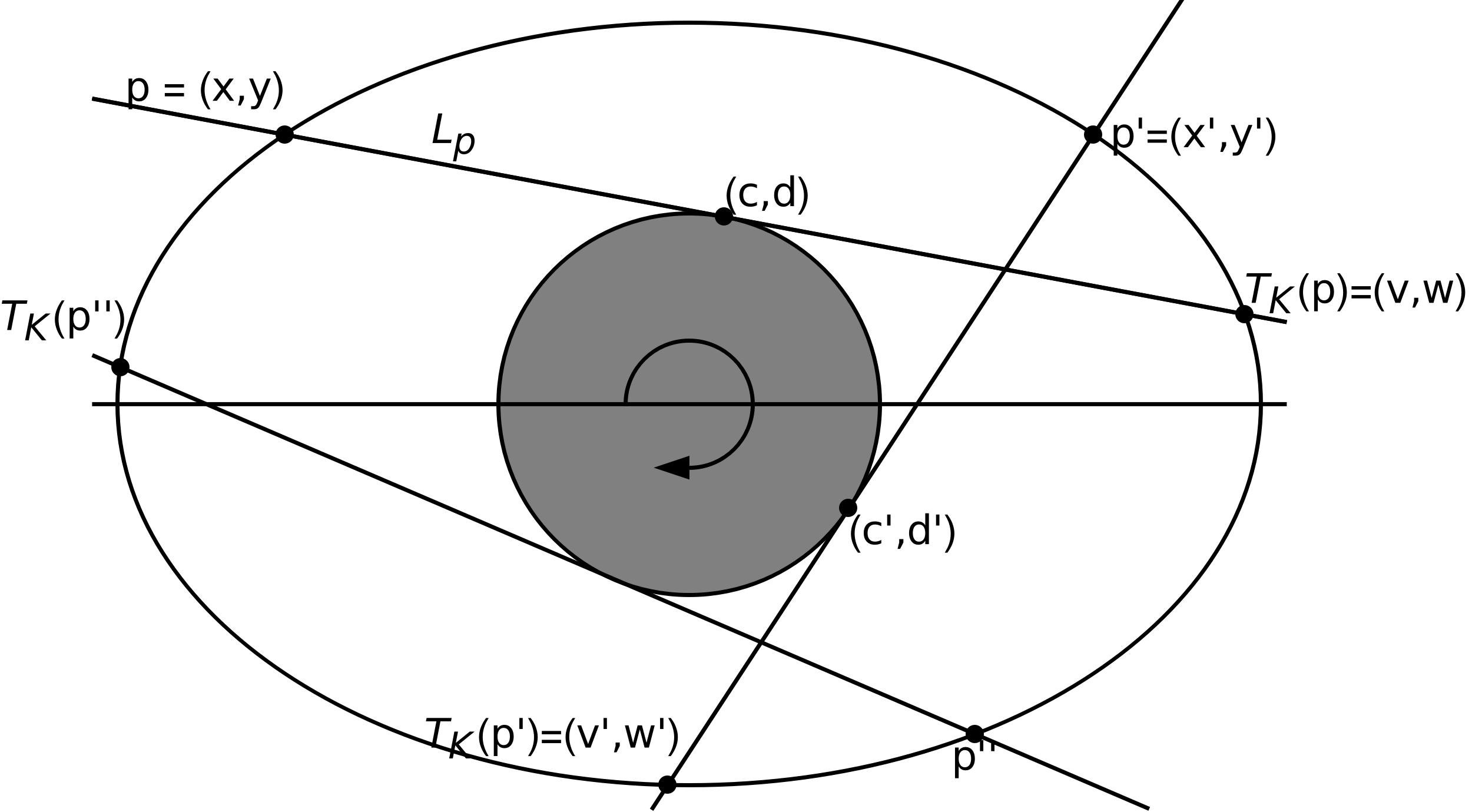}
	\end{figure}

Assume now that $C$ is as in Theorem \ref{thm_main}, $K$ is as in \eqref{def_K}, we have a point $(x,y) \in \partial K$ with $\e(x,y) = 1$ and we want to compute the point $(v,w) = T_K(x,y)$.
By Proposition \ref{prop_polynomialequation}, we know that $v$ must be a solution of the equation
\begin{equation}
	\label{eq_billardeq}
	P_{s}(v) = - \frac{\frac 43 (R^2-1)^{3/2}}{s(1+s^2)} + P_{s}(x) ,
\end{equation}
where $s=s(x,y)$, which is a third-degree polynomial equation on $v$. This means that if we fix $(x,y)$ there exist at most $3$ possible values of $v$ for which $T_K(x,y) = (v,w)$.
Assume additionally that $(x,y) \in S_R$ and take $(v',w') = T_R(x,y)$.
Since the convex body $R \B_4$ satisfies the same relation with the tangent planes as $C$ (this is, $R \B_4$ satisfies the hypothesis of Theorem \ref{thm_main}), we know that $v'$ is also a solution of equation \eqref{eq_billardeq}.

\subsection{Conditions for uniqueness of the next point}
Given a point $p \in \partial K$, equation \eqref{eq_billardeq} gives $3$ possibilities for the next point $T_K(p)$.
This represents a serious inconvenience, but there exist several situations for which \eqref{eq_billardeq} has only one solution which is valid (i.e. compatible with other properties of $K$).
That is the content of Proposition \ref{prop_uniqueness_cases}. But first we need a lemma.

\begin{lemma}
	\label{lem_side}
	Let $M \subseteq \R^2$ be a convex body, symmetric with respect to the $X$ and $Y$ axes, containing $\B_2$ in its interior, and let $X(M)$ be the largest $X$-coordinate of a point in $M$.
	Then for every $p \in \partial M$ with $\e(p)=1$ and $s(p)<0$, 
	the $Y$-coordinate of $T_M(p)$ has the same sign as $s(p) + s(X(M),0)$.
\end{lemma}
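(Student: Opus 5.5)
The plan is to convert the sign of the $Y$-coordinate of $q=T_M(p)$ into a comparison between the point $(z(s),0)$ where $L_p$ crosses the $X$ axis and the endpoint $X(M)$, and then to express that comparison through slopes.

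\emph{Setup.} Write $s=s(p)<0$. Since $\e(p)=1$, $L_p=L_s$ touches $\B_2$ at $(t(s),d(s))$ with $d(s)>0$, and by \eqref{eq_t} $t(s)>0$. By \eqref{eq_z}, $L_s$ crosses the $X$ axis at $z(s)=\sqrt{1+s^2}/|s|>1$, to the right of the tangency point. The chord $L_s\cap M$ is the segment $[p,q]$, and it contains the tangency point, which has positive $Y$-coordinate. The line goes down to the right, and by \eqref{eq_middle} $q$ lies to the right of the tangency point, so the $Y$-coordinate decreases as we move from the tangency point towards $q$.

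\emph{Step 1: reduce to comparing $z(s)$ with $X(M)$.} By symmetry and convexity, $M\cap\{y=0\}=[-X(M),X(M)]\times\{0\}$. Every point $(x,0)$ with $|x|<X(M)$ is interior to $M$, because it lies in the open convex hull of $\B_2\subseteq \operatorname{int}M$ and $(\pm X(M),0)$.
\begin{itemize}
\item If $z(s)<X(M)$, then $(z(s),0)$ is an interior point of $M$ lying on $L_s$. The chord therefore extends strictly past it, and $q$ has negative $Y$-coordinate.
\item If $z(s)>X(M)$, then $(z(s),0)\notin M$. The segment $[p,q]$ is connected, contains a point with $y>0$, and never meets $\{y=0\}$. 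Hence $q$ has $y>0$.
\item If $z(s)=X(M)$, then $(X(M),0)$ is a boundary point of $M$ on $L_s$, beyond the tangency point. So it equals $q$, and $q$ has $Y$-coordinate $0$.
\end{itemize}

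\emph{Step 2: translate into slopes.} For $s<0$ we have $z(s)=\sqrt{1+1/s^2}$, which is strictly increasing in $s$. Let $s_0<0$ be the unique slope with $z(s_0)=X(M)$. Then $\operatorname{sign}(z(s)-X(M))=\operatorname{sign}(s-s_0)$.

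It remains to identify $s(X(M),0)=-s_0$, using the sense-of-rotation convention. At the point $(X(M),0)$ we have $y=0$, so the condition $cy-dx>0$ forces $d<0$: the chosen line touches the lower half-plane and $\e=-1$. By reflection in the $X$ axis, this line is the mirror image of $L_{s_0}$, hence has slope $-s_0>0$. Therefore $s-s_0=s+s(X(M),0)$, and combining with Step 1 gives the claim.

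\emph{Main obstacle.} The only delicate point is this bookkeeping of the orientation convention defining $s(X(M),0)$. A careless choice would flip its sign and reverse the inequality. The convexity argument in Step 1 also requires the interiority of the axis points $(x,0)$ with $|x|<X(M)$, which holds because $\B_2$ lies in the interior of $M$.
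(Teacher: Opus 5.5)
Your proof is correct and has the same structure as the paper's. Both identify $s(X(M),0)=-s_0$, where $L_{s_0}$ is the upper tangent line through $(X(M),0)$, by reflecting in the $X$ axis, and then split according to the sign of $s(p)-s_0$.

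Your case arguments are somewhat more direct. Where the paper uses monotonicity of $\xi s+h(s)$ in $s$ and sandwiches the chord between $L_{s_0}$ and the $X$ axis, you observe that $(z(s),0)$ is interior to $M$ exactly when $z(s)<X(M)$ and then use connectedness of the chord. You also treat the borderline case $s(p)=s_0$, which the paper does not address.
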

\begin{proof}
	First observe that $s(X(M),0) > 0$ and $\varepsilon(X(M),0) = -1$.
	Also, due to the symmetry of $M$ with respect to the $X$ axis, the slope $s_0$ of the line $L_{s_0}$ intersecting the $X$ axis in $X(M)$ is exactly $-s(X(M),0)$ (see Figure \ref{fig_verticalsymmetry}).
	In other words, $z(s_0) = X(M)$ where $s_0 = -s(X(M),0)$.
	\begin{figure}
		\caption{The lines $L_{s_0}$ and $L_{(X(M),0)}$ are symmetric with respect to the $X$ axis. If $s<s_0$ the segment of $L_s$ bounded by the red lines is between the $X$ axis and $L_{s_0}$.}
		\label{fig_verticalsymmetry}
		\includegraphics[scale=.75]{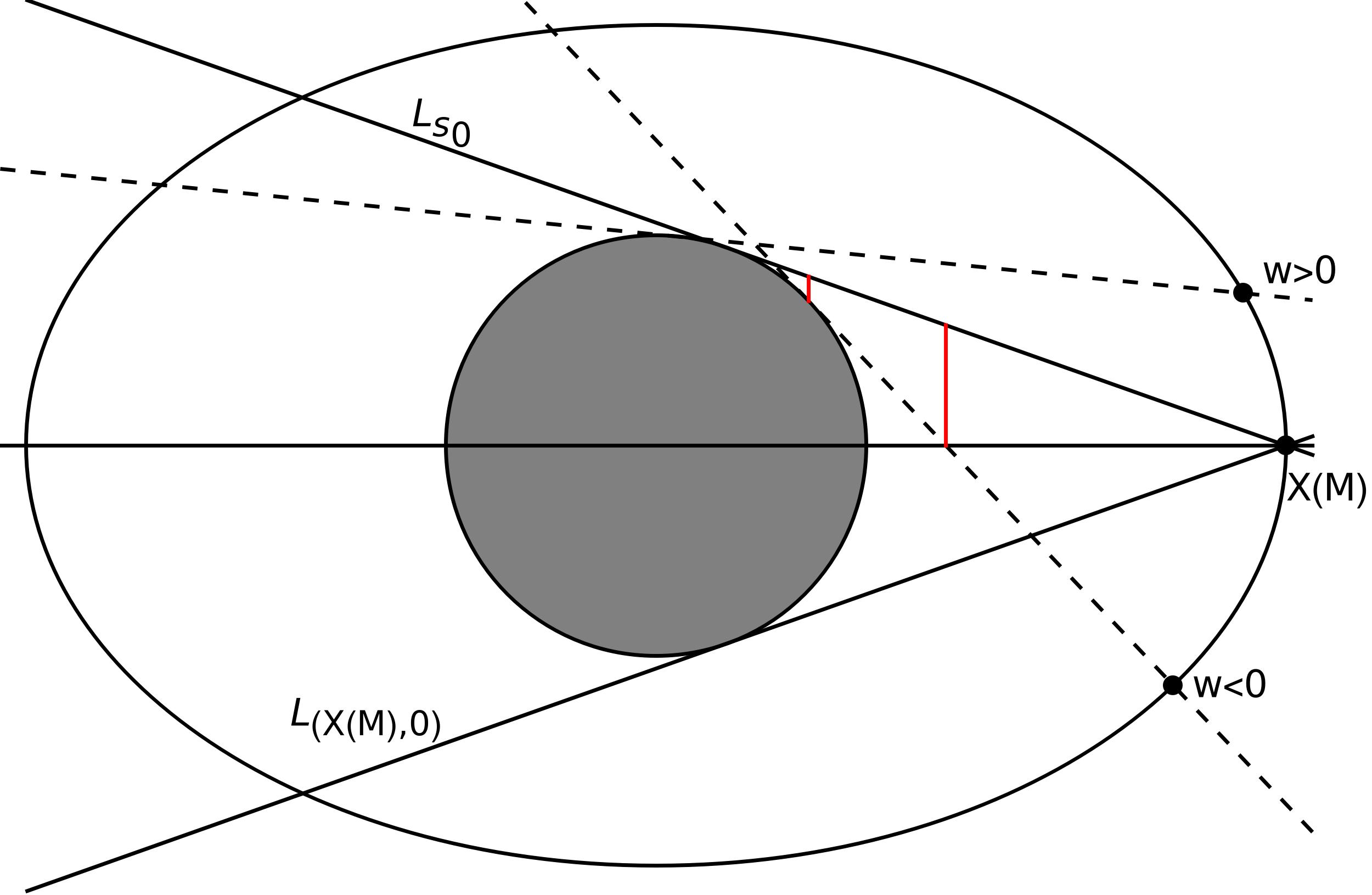}
	\end{figure}

	Letting $(v,w) = T_M(p)$, we must prove that $w > 0$ if $s(p) \in (s_0, 0)$, and $w<0$ if $s(p) < s_0$.

	Recall that the equation of the tangent line at the upper half-plane is given by \eqref{eq_line}.
	By taking derivative with respect to $s$, we see that $\xi s + h(s)$ is increasing with respect to $s$ for all $\xi > t(s)$.
	If $s(p) \in (s_0, 0)$, we have
	\[
		w = s(p) v + h(s(p)) > s(p) X(M) + h(s(p)) > s_0 X(M) + h(s_0) = 0.
	\]

	Now assume $s(p) < s_0$. 
	Since $z(s)$ and $t(s)$ are respectively increasing and decreasing with respect to $s$ for $s<0$, we know that
	\[( t(p), z(p) ) \subseteq (t(s_0), z(s_0)) = (t(s_0), X(M)).\]
	For any $\xi \in ( t(p), z(p) ) $ we have
	\[\xi s_0 + h(s_0) > \xi s(p) + h(s(p)) > 0.\]
	This implies that the segment 
	\[ \{(\xi, \xi s(p) + h(s(p))) :  \xi \in ( t(p), z(p) )\} \]
	of the line $L_{s(p)}$, is
	located between the segments
	\[ \{(\xi, \xi s_0 + h(s_0) ) :  \xi \in ( t(p), z(p) )\} \]
	and
	\[ \{(\xi, 0 ) :  \xi \in ( t(p), z(p) )\},\]
	so it is completely contained in the interior of $K$ (see Figure \ref{fig_verticalsymmetry}). Since we must have $(v,w) = T_K(p) \in \partial K$ and $v > t(p)$, we get $v > z(p)$. Then this implies
	\[
		w = s(p) v + h(s) < s(p) z(p) + h(s(p)) = 0,
	\]
	as we wanted to prove.

\end{proof}

\begin{proposition}
	\label{prop_uniqueness_cases}
	Let $K$ be defined by \eqref{def_K} where $C$ is as in Theorem \ref{thm_main}.
	Let $p \in S_R \cap \partial K$ be such that one of the following three conditions is satisfied:
	\begin{enumerate}[label = (\alph*), ref = \alph* ]
		\item \label{cond_region} $s(p) \geq 0$
		\item \label{cond_apriori} $3+2s(p) \sqrt{R^2-1} < 0$,
		\item \label{cond_interval} $s(p)$ is not between $-s(X(K),0)$ and $-s(R,0)$.
	\end{enumerate}
	Then $T_R(p) \in \partial K$.
\end{proposition}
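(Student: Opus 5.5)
We reduce to the case $\e(p)=1$. If $\e(p)=-1$, we reflect through the $Y$ axis (or, equivalently, apply the central symmetry). Both $K$ and $R\B_2$ are symmetric with respect to both axes, so this reflection commutes with $T_K$ and $T_R$. It also preserves conditions (a)--(c), which refer only to slopes through the symmetric quantities $s(X(K),0)$ and $s(R,0)$. Vertical tangent lines and $s(p)=0$ are handled directly by the symmetry remarks already made ($f(\pm1)=(R^2-1)^{1/2}$ and $a(0)=-b(0)$).

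So let $p=(x,y)$ with $\e(p)=1$, $s=s(p)$, and write $T_K(p)=(v,w)$ and $T_R(p)=(v',w')$. By Proposition \ref{prop_polynomialequation}, both $v$ and $v'$ solve \eqref{eq_billardeq}, i.e. $P_s(\cdot)=c$ with $c=P_s(x)-\frac43(R^2-1)^{3/2}/(s(1+s^2))$. By \eqref{eq_middle}, both satisfy $v,v'>t(s)$. The whole proof is then a count of the admissible roots of this cubic, using Proposition \ref{prop_zt}: $t$ is a local maximum, $z$ is a local minimum, and $zt=1$. Once we show $v=v'$, we get $(v,w)=(v',w')$, since both points lie on $L_s$, and hence $T_R(p)\in\partial K$.

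\emph{Case (a), $s>0$.} Here $t,z<0$ and $z=1/t<-1<t$. The only critical points of $P_s$ lie at or to the left of $t$, and the leading coefficient $-\frac23\sqrt{1+s^2}/s$ is negative. Hence $P_s$ is strictly decreasing on $(t,\infty)$, so $P_s=c$ has at most one root there, and $v=v'$.

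\emph{Case (c), $s<0$.} Now $0<t<1<z$ and the leading coefficient is positive. So $P_s$ decreases on $(t,z)$ and increases on $(z,\infty)$, giving at most one root in each branch. On $L_s$ we have $w=sv+h(s)$, which vanishes exactly at $v=z$. Since $s<0$, this gives $w>0$ iff $v<z$. Lemma \ref{lem_side}, applied with $M=K$ and with $M=R\B_2$, says:
\begin{itemize}
\item the sign of $w$ is the sign of $s+s(X(K),0)$;
\item the sign of $w'$ is the sign of $s+s(R,0)$.
\end{itemize}
If $s$ is not between $-s(X(K),0)$ and $-s(R,0)$, these two signs coincide. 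Then $v$ and $v'$ lie in the same monotone branch, so $v=v'$. The boundary case $w=0$ forces $v=z$ in both cases.

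\emph{Case (b).} Here the plan is to show that the branch $(t,z)$ contains no root at all, which I would do by proving $c<P_s(z)$. Then the only root beyond $t$ is the unique one in $(z,\infty)$. Because $p\in S_R$, we know $x$ explicitly as a function of $s$ and $R$: it is the left intersection of $L_s$ with $S_R$. So $P_s(z)-P_s(x)+\frac43(R^2-1)^{3/2}/(s(1+s^2))$ becomes an explicit function of $s$ and $R$. I would simplify it using $zt=1$, $z=-h/s$ and $x=(-sh-\sqrt{R^2(1+s^2)-1-s^2})/(1+s^2)$ (or an angular parametrization), aiming to factor it so that its sign is governed by $3+2s\sqrt{R^2-1}$.

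This algebraic verification is the step I expect to be the main obstacle. If the direct factorization is messy, my fallback is to parametrize by the angle of the tangency point. I would then show that the inequality $c<P_s(z)$ is monotone in that angle and becomes an equality exactly when $3+2s\sqrt{R^2-1}=0$.
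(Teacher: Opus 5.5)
\textbf{Verdict: there is a genuine gap in case (b).} Your treatment of cases (a) and (c) matches the paper's argument. For $s>0$ you use injectivity of $P_s$ on $(t,\infty)$. For $s<0$ you use one root per monotone branch, together with Lemma~\ref{lem_side} to place $v$ and $v'$ on the same side of $z$.

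\textbf{The inequality you target is false.} In case (b) you set out to prove $c<P_s(z)$, but this fails in every instance. For $s<0$ the cubic decreases on $(t,z)$ and increases on $(z,\infty)$, so $P_s\ge P_s(z)$ on $(t,\infty)$. Since $v'>t$ and $P_s(v')=c$, you always have $c\ge P_s(z)$. Had $c<P_s(z)$ held, there would be no root in $(z,\infty)$ either, which contradicts the conclusion you draw from it.

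Your fallback aims at the wrong quantity for the same reason. Equality $c=P_s(z)$ holds exactly when $v'=z$, i.e.\ when $s=-1/\sqrt{R^2-1}=-s(R,0)$, not when $3+2s\sqrt{R^2-1}=0$.

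\textbf{The correct target.} The branch $(t,z)$ contains no root iff $c\notin(P_s(z),P_s(t))$. The only alternative compatible with a root in $(z,\infty)$ is $c\ge P_s(t)$. So you must compare $c$ with the local \emph{maximum} value, which is precisely the paper's claim $c>P_s(t)$.

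With this target the algebra you flagged as the main obstacle is short. Put $\rho=\sqrt{R^2-1}$ and $h=\sqrt{1+s^2}$. Then $P_s'(\xi)=-\frac{2h}{s}(\xi-t)(\xi-z)$ and $t-z=\frac{1}{sh}$, so
\[
P_s(t+u)-P_s(t)=-\frac{2h}{3s}\,u^3-\frac{u^2}{s^2}.
\]
Since $x=t-\rho/h$ (and $v'=t+\rho/h$, which serves as a consistency check), this gives
\[
c-P_s(t)=-\frac{\rho^2\,(3+2s\rho)}{3s^2(1+s^2)}.
\]
This is positive exactly under condition (b). Then $P_s=c$ has a single real root, lying in $(z,\infty)$, which forces $v=v'$.

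\textbf{A minor point in your reduction.} Reflection in the $Y$ axis is not equivalent to the central symmetry. It sends $cy-dx$ to $-(cy-dx)$, so it reverses the orientation convention that selects $L_p$. Consequently it maps $L_p$ to the \emph{other} tangent line through the image point, and it does not conjugate $T_K$ to itself. Use only $p\mapsto -p$, which satisfies $s(-p)=s(p)$, $\varepsilon(-p)=-\varepsilon(p)$ and $T_K(-p)=-T_K(p)$.
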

\begin{proof}
	Since $K = -K$, $s(-p) = s(p)$ and $T_K(-p) = - T_K(p)$, we may assume always that $\e(p) = 1$.
	Let $(x,y) = p$.
	Take $(v,w) = T_K(x,y)$ and $(v',w') = T_R(x,y)$.
	Our goal in each condition is to prove that $T_K(p) = T_R(p)$, which implies that $T_R(p) \in \partial K$.
	This can be achieved by showing that $v = v'$, since both $(v,w)$ and $(v',w')$ belong to the line $L_p$.

	We begin with condition \eqref{cond_region}.
	If $s(p) = 0$ then it is clear that $v = v' = -x$ by the symmetry of $K$ with respect to the $Y$ axis.
	Now assume $s(p) > 0$.
	The third-degree polynomial $P_{s(p)}$ has two critical points, $t(p), z(p)$, of which $t(p)$ is a local maximum, and $z(p)$ is a local minimum.
	We recall that $x < t(p) < v,v'$ (see \eqref{eq_middle}), and by Property \eqref{eq_zt_product} in Proposition \ref{prop_zt}, we have $z(p) < -1 < t(p) <  0$.
	But this implies that $P_{s(p)}$ has no critical points in the interval $(t(p), \infty)$ making it an injective function (see Figure \ref{fig_critical}, left).
	Since both $v,v'$ are solutions of \eqref{eq_billardeq} in that interval, we conclude that $v=v'$ which means $T_R(p) = T_K(p)$, so the first part of the proof is done.

	Now let us consider condition \eqref{cond_apriori}.
	From the first part of the proof, we may assume $s(p) < 0$, which implies $0 < t(p) < z(p)$. From \eqref{eq_def_Ps} we see that
	\[\lim_{v \to \infty} P_{s(p)} (v) = +\infty.\]

	We claim that condition \eqref{cond_apriori} implies 
	\begin{equation}
		\label{eq_uniqueness_condition}
		P_{s(p)} (x) - \frac{\frac 43 (R^2-1)^{3/2}}{s(p)(1+s(p)^2)} > P_{s(p)} (t(p)).
	\end{equation}
	Given this claim, since $t(p)$ is the unique local maximum of $P_{s(p)} $, the polynomial $P_{s(p)} $ cannot have more than one pre-image of a point larger than $P_{s(p)} (t(p))$ (see Figure \ref{fig_critical}, right).
	Then again equation \eqref{eq_billardeq} has only one solution $v$ which forces $T_K(p) = T_R(p)$.
	Now let us prove the claim:

	Put $s = s(p)$. Since $p = (x,y) \in S_R$, we have
	\[
		x^2 + (x s + \sqrt{1+s^2})^2 = R^2.
	\]
	Solving for $x$ and considering $x < t(s) = - \frac s {\sqrt{1+s^2}}$ we find the value of $x$,
	\[
		x = - \frac s {\sqrt{1+s^2}} - \sqrt{\frac{R^2-1}{s^2+1}}.
	\]
	Replace this value of $x$ in \eqref{eq_uniqueness_condition} to obtain
	\begin{align}
		\frac{2 s^2}{3 (s^2+1)}-\frac{2 s^2+1}{s^2+1}+2 <
		-\frac{4 (R^2-1)^{3/2}}{3 s (s^2+1)}
		+\frac{2 \sqrt{s^2+1} }{s} \left(\sqrt{\frac{R^2-1}{s^2+1}} +\frac{s}{\sqrt{s^2+1}}\right) \\
		+\frac{2 \sqrt{s^2+1}}{3 s} \left(\sqrt{\frac{R^2-1}{s^2+1}}+\frac{s}{\sqrt{s^2+1}}\right)^3
		+\frac{2 s^2+1}{s^2} \left(\sqrt{\frac{R^2-1}{s^2+1}}+\frac{s}{\sqrt{s^2+1}}\right)^2 \\
	\end{align}
	which after some computations simplifies to \eqref{cond_apriori}, as required.
	
	Finally let us consider condition \eqref{cond_interval}.
	First notice that $-s(X(K),0), -s(R,0) < 0$.
	As before, we may assume $s(p) < 0$ and $\e(p) = 1$, which imply $x < t(p) < z(p)$.
	Since $t(p), z(p)$ are the only two critical points of $P_{s(p)} $, there can be at most one solution $v$ of \eqref{eq_billardeq} in each of the intervals
	\begin{equation}
		\label{eq_intervals}
		(-\infty, t(p)), (t(p), z(p)), (z(p), \infty).
	\end{equation}

	Once again, the solution in $(-\infty, t(p))$ has to be discarded, by \eqref{eq_middle}.
	Also, since $z(p) s(p) + h(s(p)) = 0$, the sign of $w = v s(p) + h(s(p))$ equals that of $z(p) - v$.
	Condition \eqref{cond_apriori} implies that $s(p) + s(X(K),0)$ and $s(p) + s(R,0)$ have the same sign.
	By Lemma \ref{lem_side} applied to $M=K$ and to $M=R \B_2$ we deduce that the signs of $w$ and $w'$ coincide, meaning that either $v,v' < z(p)$ or $v,v' > z(p)$.
	Since the interval $(-\infty, t(p))$ was discarded, $v, v'$ belong to the same interval in \eqref{eq_intervals} and are thus equal.
\end{proof}

From now on, the point in the plane $(\cos(\alpha), \sin(\alpha))$ will be denoted using the complex notation $e^{i \alpha}$.
\begin{proposition}
	\label{prop_small_interval}
	Assume we are given two points $R e^{i \alpha}, R e^{i \beta} \in S_R \cap \partial K$ with $0 < \alpha < \beta < \pi/2$.
	Then $R, X(K) \in [R \cos(\alpha), R \frac {\sin(\alpha-\beta)}{\sin(\alpha) - \sin(\beta)}]$.
\end{proposition}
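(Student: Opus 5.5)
Since both $Re^{i\alpha}$ and $Re^{i\beta}$ lie in $K$, the lower bounds are immediate, and the upper bound will come from a supporting-line argument using convexity and the symmetry of $K$ with respect to the $X$ axis.

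\emph{Lower bounds.} Because $Re^{i\alpha}\in K$ and $X(K)$ is the largest $X$-coordinate of a point of $K$, we get $X(K)\ge R\cos\alpha$. Trivially $R\ge R\cos\alpha$.

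\emph{Upper bound for $X(K)$.} Consider the line $\ell=\overline{Re^{i\alpha},Re^{i\beta}}$. Its intersection with the $X$ axis is at
\[
x_0=R\,\frac{\sin(\alpha-\beta)}{\sin\alpha-\sin\beta},
\]
which is exactly the right endpoint of the interval. I would verify this by solving for the zero of the linear interpolation between the two points: the parameter is $\lambda=\sin\alpha/(\sin\alpha-\sin\beta)$, and expanding gives $x_0=R(\cos\alpha\sin\beta-\sin\alpha\cos\beta)/(\sin\beta-\sin\alpha)$. Since $0<\alpha<\beta<\pi/2$, we have $\sin\beta>\sin\alpha$, so $\lambda<0$; hence $x_0$ lies on the extension of the chord beyond $Re^{i\alpha}$, on the side away from $Re^{i\beta}$.

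The key claim is that $K$ lies in the closed half-plane bounded by $\ell$ that contains the origin. Both points belong to $\partial K$, and the chord joining two boundary points of a convex body cannot, when extended, enter the interior of $K$ beyond its endpoints. Indeed, if some point $q$ of the extended line beyond $Re^{i\alpha}$ were in $K$, then $Re^{i\alpha}$ would lie on the segment between $q\in K$ and $Re^{i\beta}\in K$. This does not immediately give a contradiction. The right formulation is the following: if $q\in\operatorname{int}K$, then $Re^{i\alpha}$, being a point strictly between an interior point and a point of $K$, would be interior to $K$, which is a contradiction. So the extended ray meets $K$ only at points of $\partial K$, and $x_0\notin\operatorname{int}K$ in the appropriate sense. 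I still need the half-plane statement, not just the ray.

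To get it, use symmetry. The reflection $Re^{-i\alpha}$ also lies in $\partial K$. Suppose some point $(\xi,0)$ of $K$ had $\xi>x_0$. The triangle with vertices $(\xi,0)$, $Re^{i\beta}$, $Re^{-i\beta}$, which lies in $K$, would then contain $Re^{i\alpha}$ in its interior. This is because $Re^{i\alpha}$ lies strictly inside the angle at $(\xi,0)$, on the near side of the segment $Re^{i\beta}Re^{-i\beta}$ relative to that segment. That contradicts $Re^{i\alpha}\in\partial K$ together with the fact that $\B_2\subseteq\operatorname{int}K$ makes the triangle nondegenerate. Hence $X(K)\le x_0$; the maximum of $X$ on $K$ is attained, and by symmetry it is attained on the $X$ axis.

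\emph{Upper bound for $R$.} It remains to show $R\le x_0$. This is a purely trigonometric fact: since $S_R$ is strictly convex, its chord through $Re^{i\alpha}$ and $Re^{i\beta}$, extended, leaves the disc, so the point $(x_0,0)$ is outside $R\B_2$, which gives $x_0\ge R$. I expect the main obstacle to be the careful justification of the triangle-containment step, namely checking that $Re^{i\alpha}$ is strictly interior to the triangle. I would handle it by comparing heights: on the vertical line $x=R\cos\alpha$, the triangle's upper edge, which lies on the segment from $(\xi,0)$ to $Re^{i\beta}$, lies strictly above $\ell$ because $\xi>x_0$. Therefore it lies above $R\sin\alpha$.
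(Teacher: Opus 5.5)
Your argument is correct and is essentially the paper's. Assuming $(\xi,0)\in K$ with $\xi>x_0$, you exhibit a triangle contained in $K$ with $Re^{i\alpha}$ in its interior; the paper takes the vertices $(0,0)$, $Re^{i\beta}$, $(X(K),0)$ instead of your $Re^{\pm i\beta}$, $(\xi,0)$, and your strict-convexity argument for $R\le x_0$ is what the paper calls trivial.

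You should drop the stated ``key claim'' that $K$ lies in the half-plane of $\ell$ containing the origin. It is false: for $K=R\B_2$, the arc between the angles $\alpha$ and $\beta$ lies beyond $\ell$. It is also unnecessary, since your ray observation already gives $(x_0,0)\notin\operatorname{int}K$, whereas $(x_0,0)$ would lie strictly between $0\in\operatorname{int}K$ and $(\xi,0)\in K$.
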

\begin{proof}
	The number $x_0 = R \frac {\sin(\alpha-\beta)}{\sin(\alpha) - \sin(\beta)}$ is the $X$-coordinate of the intersection of the $X$ axis with the line $\overline {R e^{i \alpha}, R e^{i \beta}}$. By contradiction, if we had $X(K) > x_0$ this would imply that $R e^{i \alpha}$ is contained in the interior of the triangle with vertices $(0,0), R e^{i \beta}, (X(K),0)$, which is contained in $K$, and this would violate the fact that $R e^{i \alpha} \in \partial K$ (see Figure \eqref{fig_triangle}).
	\begin{figure}
		\caption{}
		\label{fig_triangle}
		\includegraphics[width=.75 \textwidth]{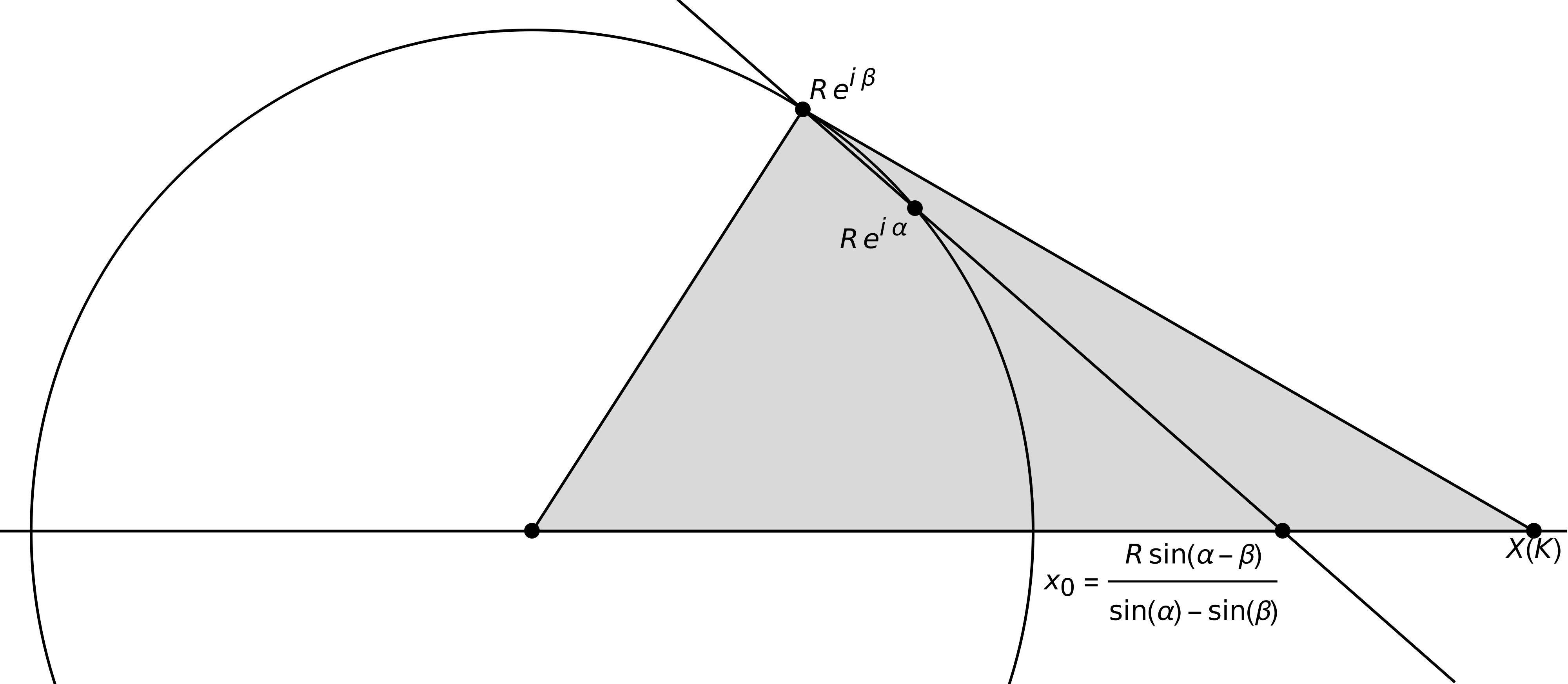}
	\end{figure}

	On the other hand, $X(K) \geq R \cos(\alpha)$ is evident from the fact that $X(K)$ is the maximal $X$-coordinate of points in $K$.

	The fact that $R$ belongs to the same interval is trivial.
\end{proof}

\section{Rotating around a shrinking interval}
\label{sec_rotation}

In this section we will define a sequence of points $u_k \in S_R$, and the goal is to show that $u_k \in S_R \cap \partial K$ for every $k \in \N$.
The starting point $u_1$ arises from the intersection of $C$ with one of the two ``vertical'' hyperplane (orthogonal to the axis of revolution), which is of course a Euclidean ball.
We choose to start with $u_1 = (1,-\sqrt{R^2-1})$.
Then we will combine Propositions \ref{prop_uniqueness_cases} and \ref{prop_small_interval} to show inductively that $u_k \in S_R \cap \partial K$ for every $k \geq 0$.

We need some definitions.
Recall that $A$, as in Theorem \ref{thm_main} represents the area of the intersection of $C \subseteq \R^4$ with any hyperplane tangent to $\B_4$.
All the quantities we define below will depend on $A$, so we will emphasize this dependence with the subscript $A$.
First of all define $R_A>0$ by
\[ A = \omega_3 (R_A^2 - 1)^{3/2}, \]
this is the relation \eqref{eq_RA_relation}.

Let
\begin{align}
\gamma_A
	&= \pi^{-1} \arctan((R_A^2-1)^{1/2}) \label{eq_def_gamma} \\
	&= \pi^{-1} \arctan((A/\omega_3)^{1/3}).
\end{align}
For $x > 1$ define
\[G_A(x) = \arctan(\sqrt{x^2-1}) - \arctan(\sqrt{R_A^2-1}).\]
For $\alpha, \beta \in (0,\pi/2)$ define
\[F_{A,+}(\alpha,\beta) = G_A \left( R_A \frac{\sin(\alpha - \beta)}{\sin(\alpha) - \sin(\beta)} \right),\]
\[F_{A,-}(\alpha) = -G_A(R_A \cos(\alpha)).\]
Also take 
\begin{align}
	\delta_{A,k}
	&= \pi \dist((2k-1) \gamma_A, \Z) \\
	&= \pi \min_{j \in \Z} |(2k-1) \gamma_A - j|.
\end{align}
Finally, for $k \geq 2$ we define $\alpha_{A,k} < \beta_{A,k}$ as the two lowest values of $\{ \delta_{A,1}, \ldots, \delta_{A,k} \}$. This is,
\begin{align}
	\label{eq_def_alphabeta}
	\alpha_{A,k} &= \min\{ \delta_{A,1}, \ldots, \delta_{A,k} \},\\
	\beta_{A,k} &= \min(\{ \delta_{A,1}, \ldots, \delta_{A,k} \} \setminus \{\alpha_{A,k} \}).
\end{align}

\begin{definition}
	\label{def_conditionC}
	Let $k \geq 2$. We say that $A>0$ satisfies condition $C(A,k)$, if the following inequalities hold.
	\begin{equation}
		\label{eq_verify}
		F_{A,+}(\alpha_{A,k}, \beta_{A,k}) < \delta_{A, k+1}, \quad F_{A,-}(\alpha_{A,k}) < \delta_{A, k+1}.
	\end{equation}
\end{definition}

Let us explain the quantities we just defined.
Let $x>1$ and consider a line $L$ passing through $(x,0)$, intersecting $\B_2$ at the upper half-plane.
This line intersects $S_{R_A}$ at two points, one of them is $R_A e^{i \eta}$ with $\eta \in (-\pi/2, \pi/2)$ (see Figure \ref{fig_projection}). Then the function $G_A$ computes exactly this angle $\eta$ as a function of $x$. This is, $\eta = G_A(x)$, as shown in Figure \ref{fig_projection}.

	\begin{figure}
		\caption{The function $G_A$ is computed as the difference between the green angle $\arctan(\sqrt{x^2-1})$ and the red angle, given by $\arctan(\sqrt{R_A^2-1})$.}
		\label{fig_projection}
		\includegraphics[width=\textwidth]{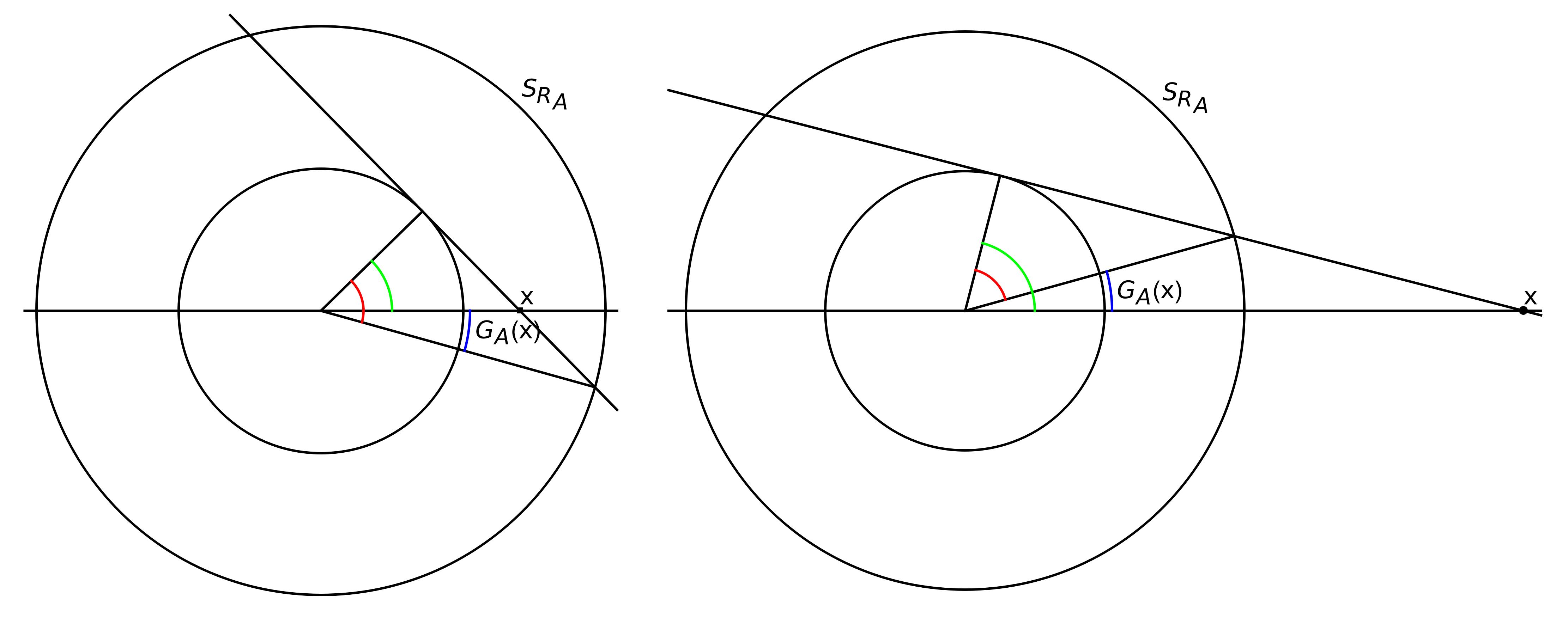}
	\end{figure}
	Regarding the numbers $\delta_{A,k}, \alpha_{A,k}, \beta_{A,k}$, we consider the points $u_{A,k} \in S_{R_A}, k \geq 1$ given by
\begin{equation}
	\label{eq_def_uk}
	u_{A,k} = R_A e^{i \pi (1-2k) \gamma_A}.
\end{equation}
It is important to notice that $\delta_{A,k}$ is exactly the angle in $(0, \pi/2)$ formed by the $X$ axis and the line $\overline{0, u_{A,k}}$.
The values of $\alpha_{A,k}, \beta_{A,k}$ are the angles corresponding to the two points from $\{u_{A,1}, \ldots, u_{A,k} \}$ which are closest to the $X$ axis. They will be used to bound the distance from $X(K)$ to $R_A$, as shown in Figure \ref{fig_interval}.
	\begin{figure}
		\caption{The points $u_{A,1}, \ldots, u_{A,k}$ bound the position of $X(K)$ which belongs to the red segment.
		The orange arc contains all the points $p \in S_R$ in the upper half-plane, with $s(p)$ between $-s(X(K),0)$ and $-s(R,0)$.
		The blue arc is the points with angle $[-F_{A,-}(\alpha_k), F_{A,+}(\alpha_k, \beta_k)]$, as in condition $C(A,k)$.
	}
		\label{fig_interval}
		\includegraphics[width=\textwidth]{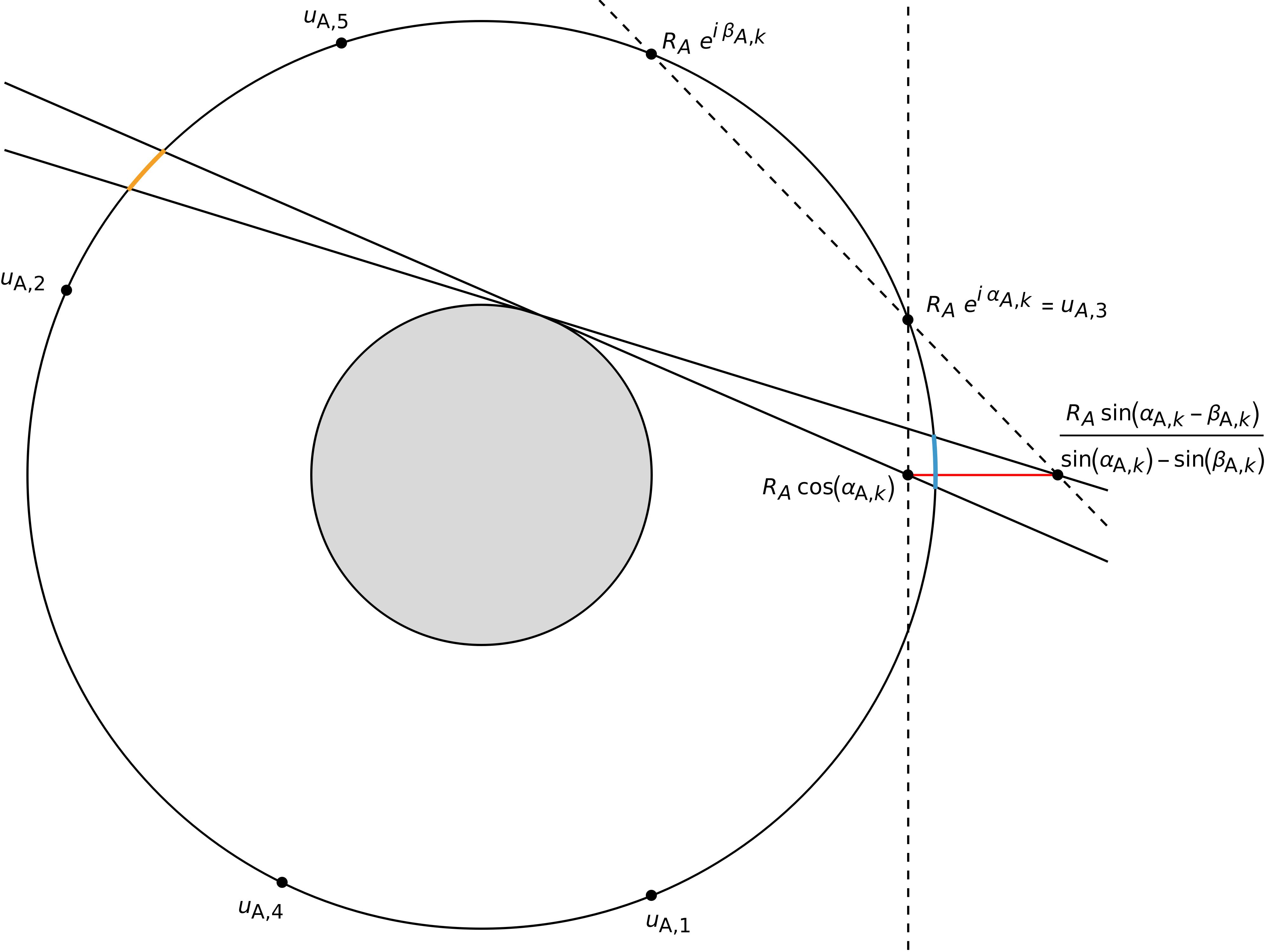}
	\end{figure}
	Now we are in conditions to define the set $\mathbb A$:
\begin{definition}
	\label{def_A}
	We define $\mathbb A \subseteq \R$ as the set of values $A>8 \omega_3$ such that $\gamma_A$ is irrational and condition $C(A,k)$ holds for every $k \geq 2$.
\end{definition}
\begin{theorem}
	\label{thm_infinite_rotation}
	Let $C \subseteq \R^4$ be a symmetric convex body of revolution containing the unit ball $\B_4$, with the following property:
	For every hyperplane $H$ tangent to $\B_4$, $\vol[3]{C \cap H} = A$ where $A$ is a constant independent of $H$.
	If $A \in \mathbb A$ then $C$ is a Euclidean ball.
\end{theorem}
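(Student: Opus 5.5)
The plan is to show by induction that every point $u_{A,k}$ of \eqref{eq_def_uk} lies on $\partial K$, and then to conclude by density. First I will check that $T_R$, restricted to $S_R$, is a rigid rotation. A chord of $S_R$ tangent to $\B_2$ subtends a central angle $2\arctan(\sqrt{R^2-1}) = 2\pi\gamma_A$. With the orientation convention for $L_p$ (clockwise toward the tangency point), this gives $T_R(R e^{i\theta}) = R e^{i(\theta - 2\pi\gamma_A)}$. Since $u_{A,1} = R e^{-i\pi\gamma_A} = (1,-\sqrt{R^2-1})$, we get $T_R(u_{A,k}) = u_{A,k+1}$. The base case $u_{A,1}\in\partial K$ holds because the vertical section through $x=1$ is a $3$-ball of radius $f(1)=\sqrt{R^2-1}$, as noted after \eqref{eq_RA_relation}. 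By the symmetries of $K$, all reflections $\pm u_{A,j}$ and $\pm\bar u_{A,j}$ also lie in $\partial K$ once $u_{A,j}$ does.

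For the inductive step, assume $u_{A,1},\dots,u_{A,k}\in\partial K$. I will apply Proposition \ref{prop_uniqueness_cases} to $p=u_{A,k}$ to get $u_{A,k+1}=T_R(p)\in\partial K$. After using $K=-K$ to normalize $\e(p)=1$, condition \eqref{cond_region} handles $s(p)\ge0$. It remains to treat $s(p)<0$, where I will use condition \eqref{cond_interval}.

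By the symmetries, the reflected points closest to the positive $X$ axis lie in $\partial K\cap S_R$ at angles $\alpha_{A,k}<\beta_{A,k}$ in $(0,\pi/2)$. Proposition \ref{prop_small_interval} then gives
\[
X(K)\in\Big[R\cos\alpha_{A,k},\ R\frac{\sin(\alpha_{A,k}-\beta_{A,k})}{\sin\alpha_{A,k}-\sin\beta_{A,k}}\Big],
\]
and $R$ lies in this interval as well. By the reflection argument of Lemma \ref{lem_side}, the line of slope $-s(x,0)$ is the upper tangent line through $(x,0)$. This line meets $S_R$ at angle $G_A(x)$, and $G_A$ is increasing in $x$. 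So every $p$ with $s(p)$ between $-s(X(K),0)$ and $-s(R,0)$ has the relevant endpoint of its chord at an angle in $[-F_{A,-}(\alpha_{A,k}),\,F_{A,+}(\alpha_{A,k},\beta_{A,k})]$; this is the blue arc in Figure \ref{fig_interval}.

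For $p=u_{A,k}$, this endpoint is (up to reflection) $u_{A,k+1}$, at angle $\delta_{A,k+1}$ from the axis. Condition $C(A,k)$ says precisely that $\delta_{A,k+1}$ lies outside that arc, so \eqref{cond_interval} holds. The first steps, $k=1$ (where $\alpha,\beta$ are not yet defined) and possibly $k=2$, I plan to treat with condition \eqref{cond_apriori}. Here $A>8\omega_3$ gives $\sqrt{R^2-1}>2$, so \eqref{cond_apriori} holds whenever $s(p)<-3/(2\sqrt{R^2-1})$, a mild slope requirement. I expect the early points either to have $s\ge0$ or to be steep enough for this.

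Once the induction is complete, irrationality of $\gamma_A$ makes $\{u_{A,k}\}$ dense in $S_R$. Since $\partial K$ is closed, $S_R\subseteq\partial K$, and convexity of $K$ forces $K=R\B_2$, so $C=R\B_4$.

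The main obstacle is the bookkeeping in the inductive step. It requires checking that the reflection and orientation conventions ($\e(p)$, the sign of $s(p)$, and which endpoint of the chord is measured by $G_A$) really identify the forbidden set of Proposition \ref{prop_uniqueness_cases}\eqref{cond_interval} with the angle interval $[-F_{A,-},F_{A,+}]$ evaluated at the angle $\delta_{A,k+1}$ of the next point, rather than at that of $u_{A,k}$. It also requires verifying that the monotonicity of $G_A$ and of $s(x,0)$ in $x$ goes in the direction I claimed. The first move would be to fix a concrete picture with $p$ in the second or fourth quadrant and track the chord through the axis crossing $(x,0)$ explicitly.
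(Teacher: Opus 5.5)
Your plan is essentially the paper's proof. The paper also starts from $u_1$ via the vertical section and uses condition (b) of Proposition~\ref{prop_uniqueness_cases} for the first step. For $k\ge 2$ it likewise uses conditions (a)/(c), with Proposition~\ref{prop_small_interval} applied to $\alpha_{A,k},\beta_{A,k}$, the monotonicity of $G_A$, $\delta_{A,k+1}=|G_A(z(u_k))|$ and $C(A,k)$, and then concludes by density.

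The step you left as an expectation closes exactly as in the paper. One computes $s(u_1)=\frac{2-R^2}{2\sqrt{R^2-1}}$, so $3+2s(u_1)\sqrt{R^2-1}=5-R^2<0$ precisely because $A>8\omega_3$. Also, $k=2$ needs no special treatment, since $\alpha_{A,2}\neq\beta_{A,2}$ are already defined and $C(A,2)$ is available.
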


\begin{proof}
	Take $A \in \mathbb A$.
	For notational convenience, in this proof we shall omit the subindex $A$ and write simply $\gamma = \gamma_A, R = R_A, G = G_A, \delta_k = \delta_{A,k}$ and so on.
	Take the tangent plane $H = \{1\} \times \langle e_2, e_3, e_4 \rangle $. By the hypothesis on $C$, $H \cap C$ is a Euclidean ball of area $A$, thus, of radius $(R^2-1)^{1/2}$.
	This means that the point $u_1 = (1, - (R^2-1)^{1/2})$ belongs to $S_R \cap \partial K$.
	This point is
	\[u_1 = (1,-(R^2-1)^{1/2}) = R e^{-i \pi \gamma},\]
	where $\gamma = \gamma_A$ is given by \eqref{eq_def_gamma}, and $u_j = u_{A,j}$ are as in \eqref{eq_def_uk}.

	Let us prove that the point $u_1$ satisfies condition \eqref{cond_apriori} in Proposition \ref{prop_uniqueness_cases}.
	Using equation \eqref{eq_line} for the lower half-plane,
	\[s(u_1) -\sqrt{s(u_1)^2+1} = -(R^2 -1)^{1/2},\]
	we may compute $s(u_1)$ obtaining
	\[s(u_1) = \frac{2-R^2}{2(R^2-1)^{1/2}}.\]
	By hypothesis, $R^2-1 = (A/\omega_3)^{2/3} > 4$, and we get
	\[3+2s(u_1) \sqrt{R^2-1} = 3 + (2-R^2) < 0.\]
	Then by Proposition \ref{prop_uniqueness_cases}, $u_2 = T_R(u_1)$ belongs to $S_R \cap \partial K$.

	Now we proceed by induction and assume that $u_1, \ldots, u_k \in S_R \cap \partial K$ for some $k \geq 2$.
	We intend to prove that $u_{k+1} \in S_R \cap \partial K$.

	As mentioned, the angle formed by the line $\overline{ 0, u_j }$ and the $X$ axis is exactly $\delta_j$.
	The point $u_j$ coincides with $e^{i \delta_j}$ only when $u_j$ belongs to the first quadrant. However, due to the symmetry of $K$ with respect to the $X$ and $Y$ axes, we still have $R e^{i \delta_j} \in S_R \cap \partial K$ for $j=1, \ldots, k$.

	Recall that $0< \alpha_k < \beta_k$ are the two lowest values of $\{\delta_1, \ldots, \delta_k\}$ (see \eqref{eq_def_alphabeta}).
	Since $k \geq 2$, $\alpha_k$ and $\beta_k$ are well defined. They are also different due to the irrationality of $\gamma$.
	Taking $\alpha = \alpha_k$ and $\beta = \beta_k$ in Proposition \ref{prop_small_interval}, 
	\begin{equation}
		\label{eq_Rl_interval}
		R,X(K) \in [R \cos(\alpha_k), R\frac {\sin(\beta_k-\alpha_k)}{\sin(\beta_k) - \sin(\alpha_k)}] .
	\end{equation}

	If $s(u_k) \geq 0$ then Proposition \ref{prop_uniqueness_cases} condition \eqref{cond_region} implies $u_{k+1} = T_R(u_k) \in S_R \cap \partial K$, so we may assume that $s(u_k) < 0$.
	Assume for simplicity that $\e(u_k) = 1$, the case $\e(u_k) = -1$ being analogous.
	We intend to show that condition \eqref{cond_interval} in Proposition \ref{prop_uniqueness_cases} is satisfied.

	The geometric interpretation of the function $G$ before the proof shows that if $p \in S_R$ is a point with $\varepsilon(p) = 1$ and $s(p)<0$ (and thus $z(p)>1$),
	\[T_R(p) = R e^{i \alpha}, \text{ with } \alpha = G(z(p)) \in (-\pi/2, \pi/2) .\]
	By \eqref{eq_Rl_interval} and the monotonicity of $G$, 
	\begin{align}
		\label{eq_GInterval}
		G(R), G(X(K)) 
		&\in [G(R \cos(\alpha_k)), G(R\frac {\sin(\beta_k-\alpha_k)}{\sin(\beta_k) - \sin(\alpha_k)})] \\
		&= [-F_-(\alpha_k), F_+(\alpha_k,\beta_k)].
	\end{align}
	Assume by contradiction that $s(u_k)$ is between $-s(X(K),0),-s(R,0)$. Then, $G$ and $z$ being monotone, $G(z(u_k)) = G(z(s(u_k)))$ must be located between $G(X(K))$ and $G(R)=0$ (see Figure \ref{fig_interval}), and consequently $\delta_{k+1} = |G(z(u_k))| \leq |G(X(K))|$.
	By \eqref{eq_GInterval} we get $\delta_{k+1} \leq \max\{F_-(\alpha_k), F_+(\alpha_k,\beta_k)\}$, which contradicts that condition $C(A,k)$ is satisfied. This completes the induction step.

	We just proved that the full sequence $\{u_1, u_2, \ldots \}$ is inside $S_R \cap \partial K$, which forms a dense orbit in $S_R$ since $\gamma$ is an irrational number.
	By the compactness of $\partial K$, we conclude that $S_R \subseteq \partial K$, and due to the convexity of $K$, this finally implies that $K = R \B_2$, and $C = R \B_4$.

\end{proof}

\section{Reduction to a finite number of inequalities}
\label{sec_finite_inequalities}
The conditions $C(A,k)$ in Theorem \ref{thm_infinite_rotation} consist of an infinite number of inequalities, but they can be reduced to a finite number using arithmetic properties of $\gamma_A$.
That is the objective of the present section.

Let us explain this situation:
The points $u_{A,k}$ form the orbit of an irrational rotation (of angle $\pi \gamma_A$ with irrational $\gamma_A$) around $S_R$.
As $k \to \infty$ the orbit becomes more and more dense, and we find points that are arbitrarily close to the $X$ axis, thus $\alpha_{A,k}, \beta_{A,k} \to 0$.
The inequalities \eqref{eq_verify} define a small interval of angles $[-F_{A,-}(\alpha_{A,k}), F_{A,+}(\alpha_{A,k},\beta_{A,k})]$ that shrinks to $\{0\}$ as $k \to \infty$.
Moreover, $F_\pm$ are functions that tend to $0$ quadratically, meaning that $0 < F_{A,-}(\alpha), F_{A,+}(\alpha, \beta) \leq c (\alpha^2 + \beta^2)$ for small $\alpha, \beta > 0$ and some constant $c>0$.
Then the induction argument of Theorem \ref{thm_infinite_rotation} can continue as long as the angle $\delta_{A,k}$ of $u_{A,k}$ does not fall in that interval.

	For the sake of clarity let us simplify the elements at play. Take any irrational number $\gamma$ and consider its multiples $\{\gamma, 2 \gamma, \ldots, k\gamma \}$. As $k \to \infty$ some of their fractional parts will approach $0$ or $1$.
	Lets assume we need to verify a condition of the form
	\begin{equation}
		\label{eq_simpler_C}
		g(\min_{j\leq k} \dist(j \gamma, \Z)) < \dist((k+1) \gamma, \Z), \text{ for all } k \geq k_0,
	\end{equation}
	where $g$ tends to $0$ quadratically.
	The Dirichlet theorem on Diophantine approximation states that for every $k \geq 1$ there exist natural numbers $j,l$ with $j \leq k$ such that $|j \gamma - l| \leq k^{-1}$.
	Thus, the left-hand side of \eqref{eq_simpler_C} decays roughly at speed $c k^{-2}$.
	If $\gamma$ is ``sufficiently irrational'', for example if it is an algebraic number, then approximations of $\gamma$ by rational numbers $p/q$ will fail by roughly $c_{\gamma,\varepsilon} q^{-2-\varepsilon}$, by Roth's theorem, and this implies that $\dist(k \gamma,\Z)$ is larger than $c_{\gamma,\varepsilon} k^{-1-\varepsilon}$.
	Then clearly the inequalities in \eqref{eq_simpler_C} will be verified for $k \geq k_0$.
	Unfortunately our situation is significantly more complicated: In the first place we have $\delta_{A,k} = \pi \dist((2k-1) \gamma, \Z)$, so only odd multiples of $\gamma$ have to be considered, and that prevents us from using the Dirichlet theorem.
	On the second place, there is the issue of the constant $c_{\gamma, \varepsilon}$, which not only can be difficult to bound, but also can tend to infinity as $\gamma$ tends to a rational number. In our case, for example, when $A \to \infty$ we have $\gamma \to 1/2$ (see equation \eqref{eq_def_gamma}).
	We need then to make a finer analysis of the numbers $\alpha_{A,k}, \beta_{A,k}, \delta_{A,k}$, and it turns out that the information we need can be read in the continued fraction of $\gamma_A$.

\subsection{Arithmetic properties of the rotation number}
\label{sec_arithmetic}

Every positive irrational number $\gamma$ can be written uniquely as an infinite continued fraction:
\[\gamma = a_0 + \frac 1{a_1 + \frac 1 {a_2 + \frac 1 {a_3 + \cdots}}},\]
where the numbers $a_i \in \N$ are called the coefficients.
We refer to Khinchin's book \cite{khinchin1964continued} for the basic theory of continued fractions.
We use the notation 
\[\gamma = [a_0, a_1, a_2, \ldots ].\]
Sometimes the coefficients are written as $a_j(\gamma)$, but we will not write the dependence on $\gamma$ to avoid overcharging the notation.

Truncating this expression we obtain a rational number
\[ \frac{p_j}{q_j} = [a_0, \ldots, a_j] = a_0 + \frac 1{a_1 + \frac 1{ \cdots + \frac 1{a_j} } }, \]
which is an irreducible fraction converging to $\gamma$ as $j \to \infty$.
The fractions $p_j/q_j$ are called the convergents of $\gamma$ and they can be defined by the recursions
\begin{align}
	q_{-1} = 0, q_0 = 1, \quad q_j = a_j q_{j-1} + q_{j-2},	\label{eq_recursion_q} \\
	p_{-1} = 1, p_0 = a_0, \quad p_j = a_j p_{j-1} + p_{j-2}. \label{eq_recursion_p}
\end{align}
The first significant fraction is $\frac {p_0}{q_0} = \frac {a_0}1$ whereas the first values $p_{-1}, q_{-1}$ are artificially set, for convenience (see \cite[equation (7)]{khinchin1964continued}).

The denominators $k = q_j$ play a significant role here, as they are exactly the places at which $\delta_{A,k}$ is smaller than all their predecessors.
This is the consequence of the following property:
\begin{proposition}[{\cite[Theorem 17]{khinchin1964continued}}]
	\label{prop_bestapprox}
	If $\gamma$ is an irrational number and $p_k/q_k$ are its convergents, then for every $p,q \in \N$ with $q \leq q_k$,
	\[|q \gamma - p| \geq |q_k \gamma - p_k|.\]
\end{proposition}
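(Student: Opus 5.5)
The plan is to prove the best approximation property of convergents, which the paper cites from Khinchin. I will use the two standard facts about convergents: the determinant identity $p_k q_{k-1} - p_{k-1} q_k = (-1)^{k-1}$, and the alternation of signs of $q_k\gamma - p_k$. Both follow from the recursions \eqref{eq_recursion_q} and \eqref{eq_recursion_p}, the first by induction.

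First, given $p,q\in\N$ with $q\le q_k$, I write $(p,q)$ as an integer combination of the two consecutive convergent vectors. The determinant identity says the matrix with columns $(p_k,q_k)$ and $(p_{k+1},q_{k+1})$ is unimodular. Hence there are unique integers $\mu,\nu$ with
\[
p = \mu p_k + \nu p_{k+1},\qquad q = \mu q_k + \nu q_{k+1}.
\]
Then I dispose of degenerate cases. If $\nu = 0$, then $q=\mu q_k$ with $\mu\ge 1$, so $|q\gamma - p| = |\mu|\,|q_k\gamma-p_k| \ge |q_k\gamma - p_k|$. If $\mu=0$, then $q = \nu q_{k+1} > q_k$ (for $k\ge1$), which contradicts $q \le q_k$; the edge cases with small indices are checked directly.

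So I may assume $\mu,\nu\neq 0$. Because $0<q\le q_k<q_{k+1}$, the coefficients $\mu$ and $\nu$ must have opposite signs: if both were positive then $q>q_{k+1}$, and if both were negative then $q<0$. Since $q_k\gamma-p_k$ and $q_{k+1}\gamma-p_{k+1}$ also have opposite signs, the products $\mu(q_k\gamma-p_k)$ and $\nu(q_{k+1}\gamma-p_{k+1})$ have the same sign. Therefore
\[
|q\gamma-p| = |\mu|\,|q_k\gamma-p_k| + |\nu|\,|q_{k+1}\gamma-p_{k+1}| \ge |q_k\gamma-p_k|,
\]
using $|\mu|\ge1$.

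The main obstacle is the sign alternation, which needs the standard fact that $\gamma$ lies strictly between consecutive convergents. I would prove it via $\gamma = \frac{p_k r_{k+1} + p_{k-1}}{q_k r_{k+1}+q_{k-1}}$, where $r_{k+1}$ is the complete quotient. This gives $q_k\gamma - p_k = \frac{(-1)^k}{q_k r_{k+1}+q_{k-1}}$, whose sign alternates in $k$ and which is nonzero since $\gamma$ is irrational.
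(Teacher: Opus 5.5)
Your proof is correct, and it takes a different route from the one behind the paper's citation. The paper gives no argument of its own; it quotes Khinchin's Theorem 17. Khinchin derives that result from his Theorem 16 (every best approximation of the second kind is a convergent) together with the strict decrease of $|q_j\gamma-p_j|$ in $j$. A minimiser of $|q\gamma-p|$ over $1\le q\le q_k$ is a best approximation, hence some convergent $p_l/q_l$ with $l\le k$, and the monotonicity forces $l=k$.

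You bypass best approximations entirely. Writing $(p,q)$ in the unimodular basis $(p_k,q_k),(p_{k+1},q_{k+1})$ and pairing the sign constraint on $(\mu,\nu)$ with the alternation of $q_j\gamma-p_j$ is self-contained: it needs only the determinant identity and the complete-quotient formula. It even gives strict inequality for $(p,q)\neq(p_k,q_k)$, since either $\nu=0$ forces $\mu=1$, or the positive term $|\nu|\,|q_{k+1}\gamma-p_{k+1}|$ is present. Khinchin's route is longer, but it also yields the converse characterisation of convergents as best approximations, which the paper does not need.

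One correction concerns your remark that ``the edge cases with small indices are checked directly''. For $k\ge1$ there are no edge cases, because $q_{k+1}\ge q_k+q_{k-1}>q_k$. The only place where $q_{k+1}>q_k$ fails is $k=0$ with $a_1=1$, and there the statement itself is false. For $\gamma=(\sqrt5-1)/2$ one has $p_0/q_0=0/1$, yet $q=p=1$ gives $|\gamma-1|<|\gamma-0|$, so that case cannot be checked directly.

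The proposition must therefore be read for $k\ge1$, or under the assumption $a_1\ge2$. This is harmless for the paper: there $a_1(\gamma_A)=2$, and the property is only invoked for $q_j$ with $j\ge2$.
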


Moreover, we have a good control of the distance from $q_k \gamma$ to the natural numbers.
\begin{proposition}
	\label{prop_estimate}
	If $\gamma$ is an irrational number and $p_k/q_k$ are its convergents, then for every $k \in \N$,
	\[ \frac 1{q_k} \frac 1{a_{k+1} +2} < |\gamma q_k - p_k| < \frac 1{q_{k+1}}.\] 
\end{proposition}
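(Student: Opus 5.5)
The plan is to use the standard identity relating $\gamma$ to its complete quotients. Write $\gamma_{k+1} = [a_{k+1}, a_{k+2}, \ldots]$ for the $(k+1)$-st complete quotient, so that
\[
\gamma = \frac{p_k \gamma_{k+1} + p_{k-1}}{q_k \gamma_{k+1} + q_{k-1}}.
\]
This is proved by induction from the recursions \eqref{eq_recursion_q}, \eqref{eq_recursion_p}, exactly as in Khinchin.

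Subtracting $p_k/q_k$ and using the determinant identity $p_{k-1} q_k - p_k q_{k-1} = (-1)^k$, I would obtain the exact formula
\[
|\gamma q_k - p_k| = \frac{1}{q_k \gamma_{k+1} + q_{k-1}}.
\]
The determinant identity itself follows by induction from the recursions.

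Both inequalities then reduce to bounds on the denominator $D = q_k\gamma_{k+1} + q_{k-1}$.

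For the upper bound, note that $\gamma_{k+1} > a_{k+1}$ strictly, since $\gamma$ is irrational and so the tail $1/\gamma_{k+2}$ is positive. Hence
\[
D > a_{k+1} q_k + q_{k-1} = q_{k+1},
\]
which gives $|\gamma q_k - p_k| < 1/q_{k+1}$.

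For the lower bound, I would use $\gamma_{k+1} < a_{k+1} + 1$, which holds because $\gamma_{k+2} > 1$. For $k \geq 1$ we also have $q_{k-1} \leq q_k$, and for $k = 0$ we have $q_{-1} = 0$. Together these give
\[
D < (a_{k+1}+1) q_k + q_k = (a_{k+1}+2) q_k,
\]
and hence $|\gamma q_k - p_k| > \frac{1}{q_k(a_{k+1}+2)}$.

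The only delicate point is strictness of the inequalities. This is handled by the irrationality of $\gamma$, which guarantees that every complete quotient $\gamma_j$ is irrational and strictly exceeds $a_j$ (in particular $\gamma_j > 1$ for $j \geq 1$). Apart from that, the main step is establishing the complete-quotient formula, which is a routine induction. Alternatively, the whole statement can be cited directly from \cite[Theorems 9 and 13]{khinchin1964continued}.
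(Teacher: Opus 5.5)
Your proof is correct and follows essentially the same route as the paper, which cites Khinchin's Theorem 9 for the upper bound, and Theorem 13 (i.e.\ $|\gamma q_k - p_k| > 1/(q_{k+1}+q_k)$) together with the recursion $q_{k+1} \le (a_{k+1}+1)q_k$ for the lower bound. Your identity $|\gamma q_k - p_k| = 1/(q_k\gamma_{k+1}+q_{k-1})$ is the standard proof of those two theorems, and your bound $D < (a_{k+1}+1)q_k + q_{k-1} = q_{k+1}+q_k \le (a_{k+1}+2)q_k$ is exactly the paper's combination step.
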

\begin{proof}
	The right inequality is \cite[Theorem 9]{khinchin1964continued}.
	The left inequality is a combination of \cite[Theorem 13]{khinchin1964continued} and \eqref{eq_recursion_q}.
\end{proof}

It is known that the denominators $q_k$ grow to infinity at least at an exponential rate. We have,
\begin{proposition}[{Theorem 12, \cite{khinchin1964continued}}]
	\label{prop_expgrowth}
	For any positive irrational $\gamma$ and $j \geq 2$,
	\[q_j \geq 2^{\frac{j-1}2}.\]
\end{proposition}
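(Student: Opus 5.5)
The bound $q_j \geq 2^{(j-1)/2}$ for $j \geq 2$ follows from the recursion \eqref{eq_recursion_q} together with $a_j \geq 1$ for $j \geq 1$. I would first record the basic facts $q_0 = 1$ and $q_1 = a_1 \geq 1$. Every coefficient $a_j$ with $j \geq 1$ is a positive integer, so the recursion gives $q_j \geq q_{j-1} + q_{j-2}$ for $j \geq 1$. In particular the sequence $(q_j)_{j \geq 0}$ is nondecreasing, since $q_{j-2} \geq 0$. The key consequence is
\[ q_j \geq q_{j-1} + q_{j-2} \geq 2 q_{j-2}, \qquad j \geq 2. \]

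Next I would prove the claim by strong induction on $j$, using this doubling-every-two-steps inequality. There are two base cases. For $j = 2$ we have $q_2 = a_2 q_1 + q_0 \geq 1 + 1 = 2 \geq 2^{1/2}$. For $j = 3$ we have $q_3 \geq q_2 + q_1 \geq 2 + 1 = 3 \geq 2^{1}$.

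For $j \geq 4$, the index $j - 2 \geq 2$ lies in the inductive range, so
\[ q_j \geq 2 q_{j-2} \geq 2 \cdot 2^{\frac{j-3}{2}} = 2^{\frac{j-1}{2}}. \]
This completes the induction.

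There is no real obstacle here; the only point requiring care is the indexing. The recursion's initial values $q_{-1} = 0$ and $q_0 = 1$ are artificial, so the doubling step $q_j \geq 2q_{j-2}$ should only be applied to indices inside the proven range. That is why the two base cases $j = 2, 3$ are checked directly instead of relying on $q_0, q_1$.
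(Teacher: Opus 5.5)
Your proof is correct. It is essentially the argument behind the result the paper cites without reproducing (Khinchin, Theorem 12): use $q_j \geq q_{j-1} + q_{j-2} \geq 2q_{j-2}$, then iterate separately along even and odd indices from the base cases $j = 2, 3$.
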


The definition of $\gamma_A$ and the inequality $A > 8 \omega_3$ restrict $\gamma_A$ to the interval $(\arctan(2)/\pi, 1/2)$, where $\arctan(2)/\pi > 1/3$.
This forces the first two coefficients of $\gamma_A$ to be $a_0(\gamma_A) = 0$, $a_1(\gamma_A) = 2$.
The property of $q_k$ given in Proposition \ref{prop_bestapprox} is useful only when $q_k$ is always an odd number. This can be achieved by choosing the correct parity on the coefficients.
The following is a direct consequence of the recursion formula \eqref{eq_recursion_q}.
\begin{proposition}
	\label{prop_parity}
		Assume $a_0=0$, $a_1=2$, $a_3$ is odd and $a_k$ is even for every $k \geq 4$.
		Then $q_j$ is odd for every $j \geq 2$.
		The first values are $q_0 = 1, q_1 = 2$.
\end{proposition}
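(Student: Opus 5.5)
The plan is to prove the statement by strong induction on $j$, working only with the recursion \eqref{eq_recursion_q} reduced modulo $2$. The base values $q_{-1}=0$ and $q_0=1$ are fixed by convention. Then $q_1 = a_1 q_0 + q_{-1} = 2\cdot 1 + 0 = 2$, which gives the two stated initial values $q_0=1$ and $q_1=2$. Note that $a_0=0$ does not enter the denominators at all; it only affects the numerators $p_j$.

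Next I handle the first indices by hand, since the hypotheses leave $a_2$ free and treat $a_3$ differently from the later coefficients. For $j=2$ we have $q_2 = a_2 q_1 + q_0 = 2a_2 + 1$, which is odd whatever the value of $a_2$. For $j=3$ we have $q_3 = a_3 q_2 + q_1 = a_3(2a_2+1) + 2$. This is odd exactly because $a_3$ is odd: the product of two odd numbers is odd, and adding $2$ keeps it odd. This is the only place where the parity hypothesis on $a_3$ is used.

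For the inductive step, take $j \ge 4$ and assume $q_{j-1}$ and $q_{j-2}$ are both odd. Since $a_j$ is even, $a_j q_{j-1}$ is even, so $q_j = a_j q_{j-1} + q_{j-2}$ has the same parity as $q_{j-2}$, namely odd. The base cases $q_2$ and $q_3$ start the induction, which then covers every $j \ge 2$.

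I expect no step to be a real obstacle; the only care needed is the bookkeeping at the start. In particular, $q_1=2$ is even, so the induction cannot begin at $j=1$. One has to check $j=2$ and $j=3$ separately before running the uniform step, and this is exactly why the hypothesis on $a_3$ (odd) differs from the one on $a_k$ for $k\ge 4$ (even).
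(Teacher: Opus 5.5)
Your proof is correct and takes the paper's approach: the paper gives no written proof, only calling the statement a direct consequence of the recursion \eqref{eq_recursion_q}, and your parity induction spells that out. You check $q_2 = 2a_2+1$ and $q_3 = a_3q_2+2$ by hand and note that $q_j \equiv q_{j-2} \pmod 2$ once $a_j$ is even for $j\ge 4$; only the oddness of $q_{j-2}$ is needed in that inductive step.
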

In this case, we define
\begin{equation}
	\label{eq_tilde}
	\tilde q_j = \frac 12(q_j + 1) \in \N
\end{equation}
for $j \geq 2$.

The following theorem aims to be used to verify condition $C(A,k)$ for every $k \geq 2$ using only finitely many inequalities that involve the coefficients $a_j$.
	\begin{theorem}
		\label{thm_estimates}
		Let $a_j$ be the coefficients in the continued fraction of $\gamma_A$.
		Assume $a_0=0$, $a_1=2$, $a_3$ is odd and $a_k$ is even for every $k \geq 4$.
		Fix some $j_0 \geq 3$.

		\begin{enumerate}[label = (\alph*), ref = \alph* ]
			\item
			\label{eq_conditions_1}
			If 
				\[\frac{\pi}{\cos(\pi / q_j) \sqrt{R_A^2 \cos(\pi / q_{j+1})^2 - 1}} (a_{j+2}+2) \leq q_j\]
			 for every $j \geq j_0$, then
			$C(A,k)$ holds for every $k \geq \tilde q_{j_0}$.

			\item 
			\label{eq_conditions_2}
			If $j_0 \geq 4$ and 
			\[\max\{F_{A,+}(\delta_{\tilde q_{j-1}}, \delta_{\tilde q_{j-2}}), F_{A,-}(\delta_{\tilde q_{j-1}})\} < \delta_{\tilde q_j}\]
			for $j = 4, \ldots, j_0$ then $C(A,k)$ holds for $k \in [\tilde q_3, \tilde q_{j_0})$.

			\item 
			\label{eq_conditions_3}
			If $3 \leq \tilde q_2 \leq \tilde q_3-2$ and $C(A,\tilde q_2-1)$ holds then $C(A,k)$ holds for $k \in [\tilde q_2, \tilde q_3-2]$.
		\end{enumerate}
	\end{theorem}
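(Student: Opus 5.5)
Since every $q_j$ with $j\ge 2$ is odd (Proposition \ref{prop_parity}), I will first translate best-approximation statements about multiples of $\gamma=\gamma_A$ into statements about the odd multiples $2k-1$. We have $\delta_{\tilde q_j} = \pi|q_j\gamma - p_j|$, because $2\tilde q_j - 1 = q_j$. Proposition \ref{prop_bestapprox} then gives, for $k<\tilde q_{j+1}$, that $\delta_k \ge \delta_{\tilde q_j}$ whenever $2k-1 \le q_j$. It also gives that the minimum over $2k-1<q_{j+1}$ is attained at $q_j$, with the second smallest value at least $\delta_{\tilde q_{j-1}}$.

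The first step is to identify the two smallest values. For $\tilde q_j \le k < \tilde q_{j+1}$ I will show $\alpha_k=\delta_{\tilde q_j}$ and $\beta_k\ge\delta_{\tilde q_{j-1}}$. In fact I expect $\beta_k=\delta_{\tilde q_{j-1}}$, up to intermediate odd multiples, which only help. Since $F_{A,+}$ is increasing in the second argument (the intersection point $x_0$ moves out as $\beta\to\alpha$), I will take the worst case $\beta$ and use the bound with $\delta_{\tilde q_{j-1}}$ replaced appropriately. This monotonicity, together with the monotonicity of $F_\pm$ in $\alpha$, should be checked directly from the formulas for $G_A$.

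For part (a) I have to show $\max\{F_+,F_-\}(\alpha_k,\beta_k) < \delta_{k+1}$ for $k\ge\tilde q_{j_0}$. For $k+1<\tilde q_{j+2}$ we have $2k+1<q_{j+2}$, so Proposition \ref{prop_bestapprox} gives $\delta_{k+1}\ge \delta_{\tilde q_{j+1}}$. By Proposition \ref{prop_estimate} this is at least $\pi/(q_{j+1}(a_{j+2}+2))$. I then need an upper bound on $F_\pm$ of order $1/q_j^2$. Using $\alpha_k\le \pi/q_{j+1}$ (Proposition \ref{prop_estimate}) and $\beta_k\le\pi/q_j$, I will estimate
\[G_A(x)-G_A(R\cos\alpha)\le \sup G_A'\cdot(x-R\cos\alpha), \qquad G_A'(x)=\frac{1}{x\sqrt{x^2-1}}.\]
The derivative is evaluated at $x\ge R\cos(\pi/q_{j+1})$, which produces the factor $\sqrt{R^2\cos^2(\pi/q_{j+1})-1}$. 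The width of the interval in Proposition \ref{prop_small_interval} should be bounded by something like $R\,\alpha\beta/\cos(\beta)$. The main obstacle is this elementary but fiddly trigonometric estimate of
\[\frac{\sin(\beta-\alpha)}{\sin\beta-\sin\alpha}-\cos\alpha,\]
including the boundary case $k+1=\tilde q_{j+1}$ itself. After this estimate, the hypothesis of (a) should rearrange exactly into $F_\pm<\delta_{k+1}$. The leftover factor $1/\cos(\pi/q_j)$ I expect to come from $\sin\beta \ge \beta\cos\beta$-type bounds.

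Part (b) is the finite analogue of (a). For $k\in[\tilde q_{j-1},\tilde q_j)$ with $4\le j\le j_0$, the argument above gives $\alpha_k=\delta_{\tilde q_{j-1}}$, $\beta_k\ge\delta_{\tilde q_{j-2}}$ and $\delta_{k+1}\ge\delta_{\tilde q_j}$. Combining these with the monotonicity of $F_\pm$ gives $C(A,k)$ directly from the stated inequality. Part (c) uses the range $\tilde q_2\le k\le \tilde q_3-2$. There $\alpha_k=\delta_{\tilde q_2}$ is constant, and $\beta_k$ is the same as or smaller than at $k=\tilde q_2-1$. At $k=\tilde q_2-1$ one has $\alpha=\alpha_{\tilde q_2-1}$, and the new minimum $\delta_{\tilde q_2}$ is even smaller, so $F_\pm$ only decreases. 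Meanwhile $\delta_{k+1}$ for $k+1\le\tilde q_3-1$ is at least the value $\delta_{\tilde q_2}$ shifted appropriately, which exceeds what $C(A,\tilde q_2-1)$ already controls. I would verify this last comparison carefully, using the best-approximation property among odd multiples below $q_3$.
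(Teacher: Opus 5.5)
Your plan has the same architecture as the paper's proof. You identify $\alpha_k=\delta_{\tilde q_j}$ by best approximation among odd multiples, bound $\delta_{k+1}\ge\delta_{\tilde q_{j+1}}>\pi/(q_{j+1}(a_{j+2}+2))$, bound $F_{A,\pm}$ through $G_A'(x)=1/(x\sqrt{x^2-1})$, and use monotonicity for (b) and (c). However, one inequality is stated backwards, and as written it breaks part (b).

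You assert $\beta_k\ge\delta_{\tilde q_{j-1}}$ for $k\in[\tilde q_j,\tilde q_{j+1})$, and in (b) $\beta_k\ge\delta_{\tilde q_{j-2}}$. Since $F_{A,+}$ is increasing in each argument, a lower bound on $\beta_k$ only gives a lower bound on $F_{A,+}(\alpha_k,\beta_k)$. Combining it with the hypothesis of (b) therefore yields nothing. The claim is also false in general. If $a_{j+1}\ge 4$, the odd integer $m=2q_j+q_{j-1}$ satisfies $m<q_{j+1}$, and by the alternating signs of $q_i\gamma-p_i$,
\[
|m\gamma-(2p_j+p_{j-1})|=(a_{j+1}-2)|q_j\gamma-p_j|+|q_{j+1}\gamma-p_{j+1}|<|q_{j-1}\gamma-p_{j-1}|,
\]
so $\beta_k<\delta_{\tilde q_{j-1}}$ once $2k-1\ge m$.

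What the argument needs is the reverse inequality, and it is immediate. The value $\delta_{\tilde q_{j-1}}$ belongs to $\{\delta_1,\dots,\delta_k\}$ and is not the minimum, so $\beta_k\le\delta_{\tilde q_{j-1}}\le\pi/q_j$; likewise $\beta_k\le\delta_{\tilde q_{j-2}}$ in (b). This is exactly what the paper uses, and your bound $\beta_k\le\pi/q_j$ in (a) silently relies on it.

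In (a) the estimate also needs tightening before the hypothesis ``rearranges exactly''. What the hypothesis of (a) converts into $C(A,k)$ is an upper bound
\[
\max\{F_{A,+}(\alpha_k,\beta_k),F_{A,-}(\alpha_k)\}\le \frac{\pi^2}{\cos(\pi/q_j)\sqrt{R^2\cos^2(\pi/q_{j+1})-1}\;q_jq_{j+1}},
\]
which is of order $1/(q_jq_{j+1})$. A bound of order $1/q_j^2$, as you first state, would not suffice. Your one-shot bound $G_A'(R\cos\alpha)\cdot R\alpha\beta/\cos\beta$ exceeds the required quantity by a factor $1/\cos(\pi/q_{j+1})$.

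The paper avoids this by splitting at $R$. It uses $F_{A,-}(\alpha)\le G_A'(R\cos\alpha)\,R(1-\cos\alpha)$, and by concavity $F_{A,+}(\alpha,\beta)\le G_A'(R)(x_0-R)$ with $x_0/R-1=2\sin(\alpha/2)\sin(\beta/2)/\cos(\frac{\alpha+\beta}2)$. Alternatively, you can keep your approach with a sharper width bound, $x_0-R\cos\alpha\le R\alpha\beta\,\frac{1+\cos\beta}{2\cos\beta}$. Combine it with $1+\cos(\pi/q_j)=2\cos^2(\pi/(2q_j))\le 2\cos(\pi/q_{j+1})$, which holds since $q_{j+1}\ge 2q_j$.

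Incidentally, $x_0$ moves \emph{in}, to $R/\cos\alpha$, as $\beta\to\alpha$, though your claim that $F_{A,+}$ is increasing in $\beta$ is still correct. Part (c) matches the paper once ``shifted appropriately'' is read as $\delta_{k+1}\ge\delta_{\tilde q_2}$ for $2k+1<q_3$.
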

Before the proof we make some observations:
The condition \eqref{eq_conditions_1} consists of infinitely many inequalities.
But by the exponential growth of $q_j$ (Proposition \ref{prop_expgrowth}), they will be automatically satisfied for large $j_0$ if we impose a sub-exponential (polynomial or even constant) growth condition on the coefficients $a_j$.
This is not a strong requirement. For example, by \cite[Theorem 30]{khinchin1964continued}, for almost all real numbers $x$, there is $j_x \in \N$ such that $a_j \leq j^2$ for $j \geq j_x$.
Also, it is known that the sequence $a_j$ is eventually periodic (and thus bounded) if and only if $\gamma_A$ is the root of a quadratic polynomial.

For numbers $\gamma$ with bounded continued fraction it suffices to take the first $j_0$ for which
\begin{equation}
	\label{eq_firstj0}
	\frac{\pi}{\cos(\pi / q_{j_0}) \sqrt{R_A^2 \cos(\pi / q_{j_0+1})^2 - 1}} \left(\max_{j \geq j_0+2}\{a_j\}+2\right) \leq q_{j_0},
\end{equation}
and condition \eqref{eq_conditions_1} will be automatically satisfied, since the left-hand side is decreasing.

Using Proposition \ref{prop_expgrowth} and $R_A^2>5$, it is easy to see that taking $j_0 \geq 5$, the inequality
\begin{equation}
	\label{eq_simple_condition}
	\pi (a_{j+2}+2) \leq 2^{\frac{j-1}2}
\end{equation}
for $j \geq j_0$, implies condition \eqref{eq_conditions_1}.
In other words, if $\gamma_A$ has a continued fraction with sub-exponential growth, then $C(A,k)$ will be satisfied for large $k$.

If all conditions \eqref{eq_conditions_1}, \eqref{eq_conditions_2} and \eqref{eq_conditions_3} are verified, it remains to check $C(A,k)$ for $k \in [2, \tilde q_2 - 2]$ and $k=\tilde q_3 -1$.
For some examples it is enough to verify condition \eqref{eq_conditions_1} for $j \geq j_0$ and check the remaining conditions $C(A, 2), \ldots, C(A, \tilde q_{j_0}-1)$ individually. If $\tilde q_{j_0}$ is a very large number, condition \eqref{eq_conditions_2} can be used to reduce it to $\tilde q_3$, and condition \eqref{eq_conditions_1}, to $\tilde q_2$.
Notice that condition \eqref{eq_conditions_2} is stronger than $C(A,\delta_{\tilde q_{j-1}})$, because $\tilde q_{j-2}$ is not necessarily the second smallest value of $\{\delta_1, \ldots, \delta_{\tilde q_{j-1}} \}$. The advantage of putting $\delta_{\tilde q_{j-2}}$ instead of $\beta_{\tilde q_{j-1}}$ is that the computation of $\beta$ can be very intensive if $\tilde q_{j_0}$ is very large.
	\begin{proof}[Proof of Theorem \ref{thm_estimates}]
		As in the proof of Theorem \ref{thm_infinite_rotation}, we shall omit the subindex $A$ and write simply $\gamma = \gamma_A, R = R_A, G = G_A, \delta_k = \delta_{A,k}$ and so on.
		Recall that
		\[\delta_k = \pi \inf_{m \in \N_0} |(2k-1) \gamma - m|.\]
		Since by Proposition \ref{prop_parity}, all $q_j$ are odd for $j \geq 2$, we define $\tilde q_j$ by \eqref{eq_tilde}. By Proposition \ref{prop_bestapprox},
		\begin{align}
			\delta_{\tilde q_j} 
			&= \pi \inf_{m \in \N_0} |q_j \gamma - m| \\
			&= \pi |q_j \gamma - p_j|.
		\end{align}

		Let $k \in \N$, $k \geq \tilde q_{j_0}$, and take $j$ such that $k \in [\tilde q_j,\tilde  q_{j+1})$ with $j \geq j_0$.
Propositions \ref{prop_bestapprox} and \ref{prop_estimate} imply that
		\begin{equation}
		\label{eq_alpha_bound}
			\alpha_k = \min\{\delta_1, \ldots, \delta_k\} = \delta_{\tilde q_j} = \pi |q_j \gamma - p_j| \leq \frac \pi {q_{j+1}}.
		\end{equation}
		Moreover, 
		\begin{equation}
		\label{eq_beta_bound}
			\beta_k \leq \pi |q_{j-1} \gamma - p_{j-1}| \leq \frac \pi {q_j}.
		\end{equation}

		(Notice that $\beta_k$ is the second lowest value of $\{\delta_1, \ldots, \delta_k\}$, which is not necessarily attained at $\delta_{\tilde q_{j-1}}$, thus the left inequality).

		We intend to show that inequalities \eqref{eq_verify} are satisfied.
		We start with $F_-$. Since $G(R)=0$ and $G'(x) = \frac 1 {x\sqrt{x^2-1}}$ is positive and decreasing for $x>1$,
		\begin{align}
			F_-(x)
			&= -G(R \cos(x)) \\
			&\leq \frac 1{R \cos(x) \sqrt{(R \cos(x))^2 - 1}} (R - R \cos(x) ) \\
			&\leq \frac 1{\cos(x) \sqrt{(R \cos(x))^2 - 1}} x^2.
		\end{align}
		Since $F_-$ is increasing, we bound
		\begin{align}
			F_-(\alpha_k)
			&\leq F_-(\pi/q_{j+1}) \\
			&\leq \frac {\pi^2}{\cos(\pi/q_{j+1}) \sqrt{R^2 \cos(\pi/q_{j+1})^2-1}} \frac 1{q_{j+1}^2}. \label{eq_fminusbound}
		\end{align}
		For $F_+$, since $R \frac{\sin(s - t)}{\sin(s) - \sin(t)} \geq R$ and $G$ is concave, we have
		\begin{align}
                        F_+(s,t)
			&=G \left( R \frac{\sin(s - t)}{\sin(s) - \sin(t)} \right) \\
			&\leq G'(R) \left(R \frac{\sin(s - t)}{\sin(s) - \sin(t)} - R \right) \\
			&\leq \frac 1 {\sqrt{R^2-1}} \left( \frac{\sin(s - t)}{\sin(s) - \sin(t)} - 1 \right) \\
			&= \frac 1 {\sqrt{R^2-1}} \frac{2 \sin(s/2) \sin(t/2)}{\cos(\frac {s+t}2)} \\
			&\leq \frac 1 {\sqrt{R^2-1}} \frac{s t}{2 \cos(\frac {s+t}2)},
		\end{align}
		so
		\begin{align}
			F_+(\frac \pi{q_{j+1}}, \frac \pi {q_j})
			&\leq \frac 1 {\sqrt{R^2-1}} \frac {\pi^2} {2 \cos(\pi / q_j)} \frac 1{q_{j+1} q_j} \label{eq_fplusbound}.
		\end{align}
		On the other hand, $k+1 \leq \tilde q_{j+1}$. By the left inequality of Proposition \ref{prop_estimate},
		\begin{align}
			\delta_{k+1}
			&\geq \pi |q_{j+1} \gamma - p_{j+1}| \\
			&\geq \frac \pi {q_{j+1} (a_{j+2} + 2)} \label{eq_rlower}.
		\end{align}
		Putting together \eqref{eq_alpha_bound}, \eqref{eq_beta_bound}, \eqref{eq_fminusbound}, \eqref{eq_fplusbound}, condition \eqref{eq_conditions_1}, and \eqref{eq_rlower}, we get 

		\begin{align}
			\max\{F_{+}(\alpha_{k}, \beta_{k}), F_{-}(\alpha_{k}) \}
			&\leq \max \left\{ \frac {\pi^2}{\cos(\pi/q_{j+1}) \sqrt{R^2 \cos(\pi/q_{j+1})^2-1}} \frac 1{q_{j+1}^2},\right.\\ &\left. \frac 1 {\sqrt{R^2-1}} \frac {\pi^2} {2 \cos(\pi / q_j)} \frac 1{q_{j+1} q_j} \right\} \\
			& \leq  \frac {\pi^2}{\cos(\pi/q_j) \sqrt{R^2 \cos(\pi/q_{j+1})^2-1}} \frac 1{q_{j+1} q_j} \\
			& \leq \frac \pi {q_{j+1} (a_{j+2} + 2)} \\
			& \leq \delta_{k+1},
		\end{align}
		which is exactly condition $C(A,k)$.

		We have so far established condition $C(A,k)$ for $k \geq \tilde q_{j_0}$.
		Let us now prove \eqref{eq_conditions_2}.
		Take any $k \in \N$ with $k \in [\tilde q_3, \tilde q_{j_0})$, and $j$ such that $k \in [\tilde q_{j-1}, \tilde q_j)$ with $j \in [4,j_0)$.
		Once again, $k+1 \leq \tilde q_j$ so $\delta_{k+1} \geq \delta_{\tilde q_j}$.
		Also, $j \geq 4$ which guarantees that $q_{j-1}, q_{j-2}$ are odd, and
		\[\alpha_k = \delta_{\tilde q_{j-1}}, \beta_k \leq \delta_{\tilde q_{j-2}}.\]
		Since $F_\pm$ are increasing with respect to all their coordinates, using the condition in \eqref{eq_conditions_2},
		\[
			\max\{F_+(\alpha_k, \beta_k), F_-(\alpha_k)\}
			\leq \max\{F_+(\delta_{\tilde q_{j-1}}, \delta_{\tilde q_{j-2}}), F_-(\delta_{\tilde q_{j-1}}) \} 
			\leq \delta_{\tilde q_j} 
			\leq \delta_{k+1},
		\]
		which is again $C(A,k)$.

		Finally, take any $k \in \N$ with $k \in [\tilde q_2-1, \tilde q_3-2]$.
		Since $k+1 \in [\tilde q_2, \tilde q_3)$ we have $\delta_{k+1} \geq \delta_{\tilde q_2}$.
		Also, since $F_\pm$ are increasing functions with respect to all their coordinates and $\alpha_k, \beta_k$ are decreasing sequences,
		\[
			\max\{F_+(\alpha_k, \beta_k), F_-(\alpha_k)\}
			\leq \max\{F_+(s_{\tilde q_2 -1}, t_{\tilde q_2 -1}), F_-(s_{\tilde q_2 -1})\}
			\leq \delta_{\tilde q_2} 
			\leq \delta_{k+1},
		\]
		where we used that $C(A, \tilde q_2-1)$ holds.
	\end{proof}

\subsection{Examples and description of the set $\mathbb A$}
	\label{sec_examples}

	The inequalities in Theorem \ref{thm_estimates} can be verified with a simple computer program that handles symbolic computations.
	In Figure \ref{fig_goodR} we show a sample of some values of $R_A$ for which the conditions $C(A,k)$ are verified for all $k \geq 2$, by means of Theorem \ref{thm_estimates}.

	In this section we will write the coefficients in the continued fraction of $\gamma$, as $a_j(\gamma)$.
	The corresponding area is computed from \eqref{eq_def_gamma} as
	\[A(\gamma) = \frac 43 \pi \tan(\pi \gamma)^3.\]
	
	\begin{figure}
		\caption{Some values of $R_A$ obtained using numerical simulations. The green lines represent the values of $R_A$ for which the conditions $C(A,k)$ are verified for all $k \geq 2$. The red lines represent the values of $R_A$ for which Theorem \ref{thm_estimates} failed to guarantee the conditions.
		All the computations were done in Wolfram Mathematica 14.2.}
		\label{fig_goodR}
		\includegraphics[width=\textwidth]{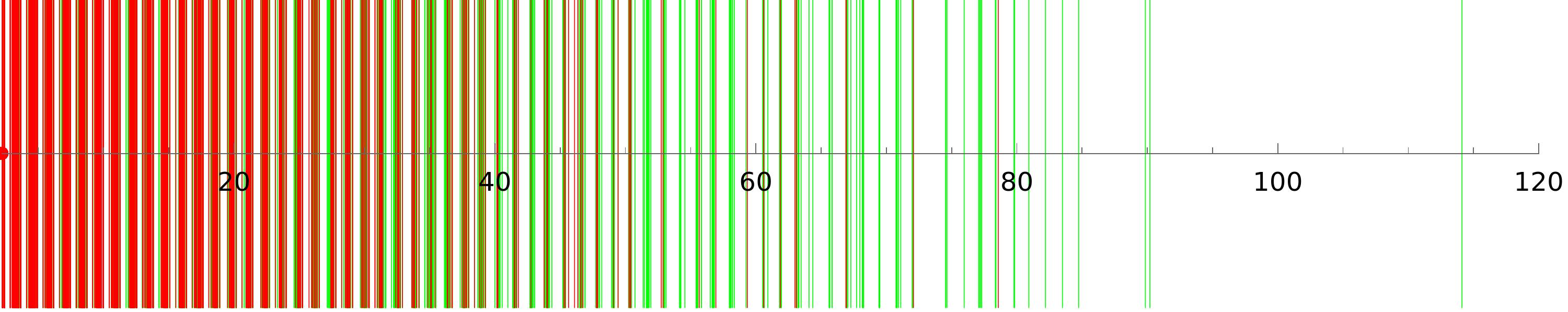}

	\end{figure}

	Let us start with a very simple example. Take
	\begin{align}
		\label{eq_RA_1}
		\gamma_0 &= \frac{1}{14} \left(\sqrt{2}+4\right) , \\
		R_0 &= \frac 1 { \cos(\pi \gamma_0) } \cong 2.87037, \\
		A_0 &= A(\gamma_0) = \frac 43 \pi \tan(\pi \gamma_0)^3 \cong 81.5849.
	\end{align}
	As a continued fraction, we have
	\[ \gamma_0 =  [0, 2, 1, 1, 2, 2, 2, \dots ], \]
	where $a_k(\gamma_0) = 2$ for $k \geq 4$.
	The first denominators of the convergents are
	\[ {q}_2=3,{q}_3=5,{q}_4=13,{q}_5=31,\]
	and the coefficients $\tilde{q}_j$ are
	\[ \tilde{q}_2=2,\tilde{q}_3=3,\tilde{q}_4=7,\tilde{q}_5=16.\]
	One verifies condition \eqref{eq_firstj0} for $j_0=4$.
	Condition \eqref{eq_conditions_1} in Theorem \ref{thm_estimates} guarantees that $C(A_0, k)$ holds for every $k \geq \tilde q_4 = 7$.
	Then one verifies by that $C(A_0,2)$ through $C(A_0,6)$ hold, so Theorem \ref{thm_main} holds true for this value of $A_0$.
	In other words, if a convex body $C$ satisfies the hypotheses of Theorem \ref{thm_main} with $A = A_0$ then $C$ is the Euclidean ball of radius $R_0$.

	We have shown that the set $\mathbb A$ is at least non-empty.
	Now let us show that $\mathbb A$ has positive Hausdorff dimension, and in particular is infinite.
	Take $\gamma_1 = \frac{1}{94} \left(\sqrt{2}+40\right)$. Its continued fraction is
	\[ \gamma_1 = [0, 2, 3, 1, 2, 2, 2, \ldots].\]
	Again define $R_1, A_1$ as in \eqref{eq_RA_1}.
	Condition \eqref{eq_firstj0} is verified for $j_0=3$, which guarantees $C(A_1, k)$ for every $k\geq \tilde q_3 = 5$ and one checks that $C(A_1,2), C(A_1, 3), C(A_1,4)$ hold.
	Moreover, \eqref{eq_firstj0} still holds if we replace the term $\max_{j \geq j_0+2}\{a_j\}$ by $7$.
	Let 
	\[E[a_0, \ldots, a_N] = \{[a_0, \ldots, a_N, x]: x>0\}\]
	be the set of positive real numbers whose continued fraction starts with $a_0, \ldots, a_N$.
	Let $\mathcal E_{\gamma_1, N}$ be the set of $x \in E[a_0(\gamma_1), \ldots, a_N(\gamma_1)]$ such that $a_j(x)$ belongs to the set $\{2,4,6\}$ for $j>N$.
	By the right inequality of Proposition \ref{prop_estimate}, any two numbers whose continued fraction coincide at the first $N$ terms, must be at distance at most $\frac 2{q_N q_{N+1}}$.
	Since $C(A_1,2), C(A_1, 3), C(A_1,4)$ are strict inequalities, we can take $N$ sufficiently large so that  $C(A(\gamma),2), C(A(\gamma), 3), C(A(\gamma),4)$ still hold for every $\gamma \in \mathcal E_{\gamma_1, N}$.
	In this case we have $A(\gamma) \in \mathbb A$ for all $\gamma \in \mathcal E_{\gamma_1, N}$, which is an uncountable set.
	Moreover, by a classical result of I. J. Good \cite[Theorem 11]{good1941fractional}, together with the elementary fact \cite[Lemma 1]{good1941fractional}, the Hausdorff dimension of $\mathcal E_{\gamma_1, N}$  is strictly positive. By the invariance of the Hausdorff dimension through diffeomorphisms, the same happens with $\{A(\gamma) : \gamma \in \mathcal E_{\gamma_1, N} \} \subseteq \mathbb A$.
	We conclude that the Hausdorff dimension of $\mathbb A$ is strictly positive.

	Let us show a family of numbers $\gamma$ which is considerably larger:
	We stay with the numbers $\gamma$ close to $\gamma_1$, but this time we impose a polynomial growth on $a_j(\gamma_1)$, instead of constant.
	We assume that $\gamma \in E[a_0(\gamma_1), \ldots, a_N(\gamma_1)]$ and that $a_j \leq j^2$ for $j \geq 4$.
	Equation \eqref{eq_conditions_1} holds for $j \geq j_0 = 4$, this is,
	\begin{equation}
		\label{eq_firstj1}
		\frac{\pi(a_{j+2}+2)}{\cos(\pi / q_{j}) \sqrt{R^2 \cos(\pi / q_{j+1})^2 - 1}}  
		\leq \frac{\pi((j+2)^2+2)}{\cos(\pi / q_{j}) \sqrt{R^2 \cos(\pi / q_{j+1})^2 - 1}} 
		\leq q_j.
	\end{equation}
	To see the last inequality, one checks that
	\begin{equation}
		\label{eq_comparison_with_exponential}
		\frac{\pi}{\cos(\pi / q_{j}) \sqrt{R^2 \cos(\pi / q_{j+1})^2 - 1}} \leq \frac{2^{\frac{j-1}2}}{(j+2)^2 + 2}
	\end{equation}
	holds for $j = 17$, and that the left-hand side is decreasing, while the right-hand side is increasing for $j\geq 17$. Then \eqref{eq_comparison_with_exponential} holds for every $j \geq 17$ and the remaining cases $j=4, \ldots, 16$ are checked separately.

	Let $Q_n = \{x>0:a_j(x) \leq j^2, \text{ for } j \geq n+1\}$.
	As before, we see that there is a large value of $N$, such that any irrational $\gamma$ in $Q_N \cap E[a_0(\gamma_1), \ldots, a_N(\gamma_1)]$ with even coefficients $a_j(\gamma_1)$ for $j \geq 4$, belongs to $\mathbb A$.

	We argue that this set is large in some sense.
	It can be shown that $Q_N \cap E[a_0(\gamma), \ldots, a_N(\gamma)]$ actually has positive measure.
	Indeed, by \cite[Theorem 30]{khinchin1964continued}, the increasing union of the $Q_n$ for $n \geq 4$ is a full-measure set.
	Unfortunately, the condition that $a_j(\gamma_1)$ is even for $j \geq 4$ forces this set of points to have measure zero.

	\section{Concluding Remarks}

	Finally, let us observe that the condition imposed on the parity of $a_j(\gamma)$ for $j \geq 4$ can be dropped.
	The first denominator is $q_0 = 1$, and formula \eqref{eq_recursion_q} shows that if $q_{j-2}$ is odd and $q_{j-1}$ is even, then $q_j$ must be again odd.
	This way, we have an infinite sequence of odd denominators, which can be used to obtain bounds as in Theorem \ref{thm_estimates}.
	Using only this sequence instead of the full sequence $q_j$, the conditions in Theorem \ref{thm_estimates} become considerably more complicated, and the bounds practically intractable, where millions of inequalities must be verified.
	Then we decide to leave the following question open:
	\begin{question}
		Is $\mathbb A$ a set with positive Lebesgue measure?
	\end{question}


\bibliographystyle{abbrv}

\bibliography{../references}

\end{document}